\theoremstyle{plain}
\newtheorem{theorem}{Theorem}
\newtheorem{corollary}[theorem]{Corollary}
\newtheorem{lemma}[theorem]{Lemma}
\theoremstyle{remark}
\theoremstyle{definition}
\newtheorem{remark}[theorem]{Remark}
\newtheorem{proposition}[theorem]{Proposition}
\newtheorem{example}[theorem]{Example}
\numberwithin{equation}{section}
\numberwithin{theorem}{section}
\begin{document}

\title{On polynomial functions on  non-commutative groups}

\author{J.~M.~Almira, E.~V.~Shulman}

\subjclass[2010]{22A25, 39A70, 39B52.}

\keywords{Group representations, functional equations, polynomial functions on groups, iterates of difference operators, Montel theorems, compact elements}

\address{Departamento de Ingenier\'{\i}a y Tecnolog\'{\i}a de Computadores, Facultad de Inform\'{a}tica, Universidad de Murcia, Campus de Espinardo, 30100 Murcia, Spain;\newline
and \newline
Departamento de Matem\'{a}ticas, Universidad de Ja\'{e}n, E.P.S. Linares,  Campus Cient\'{\i}fico Tecnol\'{o}gico de Linares, 23700 Linares, Spain}
\email{jmalmira@um.es; jmalmira@ujaen.es}

\address{Department of Mathematics, Vologda State University, S. Orlova~ 6, Vologda 160000, Russia;\newline
and \newline
Instytut of Mathematics, University of Silesia, Bankowa 14,  PL-40-007 Katowice, Poland}
\email{shulmanka@gmail.com, ekaterina.shulman@us.edu.pl}

\begin{abstract}
Let $G$ be a topological group. We investigate relations between two classes of ``polynomial like''  continuous functions on $G$ defined, respectively, by the conditions
1)  $\Delta_h^{n+1}f=0\ $  for every $h \in G$, \,  and \, 2) $\Delta_{h_{n+1}} \Delta_{h_{n}}\cdots \Delta_{h_{1}}f=0$ \, for every $h_1,\cdots, h_{n+1} \in G.$ \,
It is shown that for many (but not all)   groups these classes coincide. We consider also Montel type versions of the above conditions - when 1) and 2) hold only for $h$ in a generating subset of $G$. Our approach is based on the study of the counterparts of the discussed classes for general representations of groups (instead of the regular representation).
\end{abstract}

\maketitle

\markboth{J.~M.~Almira, E. Shulman}{On polynomial functions on  non-commutative groups}

\section{Introduction}

In the works of M. Frechet \cite{{Frechet-09}, {Frechet-29}},  Van der Lijn \cite{VdL}, S. Mazur and W. Orlicz \cite{M-O} it was shown that the ordinary polynomials on $\mathbb{R}$ or $\mathbb{R}^n$ can be characterized by some conditions (functional equations) which allow one to define and study the analogues of polynomials  on commutative groups, semigroups and linear spaces.

In this work we are concentrated on the study  of two conditions for scalar functions on an {\it arbitrary} topological group $G$ (discrete groups are regarded as a special class of topological ones): the Fr\'{e}chet functional equation
\begin{equation} \label{MD}
\Delta_{h_{n+1}} \Delta_{h_{n}}\cdots \Delta_{h_{1}}f=0,  \qquad \text{ for all \ } h_1,\cdots,h_{n+1}\in G
\end{equation}
and the equation for iterated differences
\begin{equation}\label{UD}
\Delta_h^{n+1}f=0,  \qquad \text{ for all \ } h\in G.
\end{equation}
Here the difference operator $\Delta_h$ is defined as $R_h - 1$ where $R_h$ is the right shift, $R_hf(x)=f(xh)$. It will be shown below (see Remark \ref{left}) that using left shifts one comes to the same classes of functions.

To fix the notations we say that a function $f:G\to \mathbb{C}$ is a {\it polynomial of degree at most $n$} if it satisfies \eqref{MD}; furthermore, a function $f:G\to \mathbb{C}$ is said to be a {\it semipolynomial of degree at most $n$} if it solves \eqref{UD}.
We denote these classes of functions by $(P_n)$ and $(SP_n)$, respectively. The elements of  $(P)=\bigcup_{n\geq 0}(P_n)$ are called {\it polynomials}, and the elements of $(SP)=\bigcup_{n\geq 0}(SP_n)$ are called {\it semipolynomials}. Of course, $(P_n)\subseteq (SP_n)$, for every $n$. In general the inclusion can be strict (see Example \ref{n=1} below). The fact that these two classes coincide for $G=\mathbb{R}$, was established by Mazur and Orlicz  \cite{M-O}. For an arbitrary commutative group $G$, the equality $(SP_n) = (P_n)$ -- not only in the case of complex-valued functions but for maps to an Abelian group with some  restrictions on the latter -- was proved by Van der Lijn \cite{VdL}, Djokovi\'{c} \cite{Dj} (here $G$ can be a semigroup), Szekelyhidi \cite{Szek-McK} (see also an alternative proof in \cite{L_Dj}) and Laczkovich \cite{L} (this paper contains the most general results and systematic treatment of the subject).

    We will find a wider variety  of groups where the equivalence takes place. For example, it will be shown that $(SP_n) = (P_n)$ if $G$ coincides with $G_c$, the closure  of the subgroup generated by all compact elements,  and more generally, if $G/G_c$ is commutative (Corollary~ \ref{comGr}). We will prove that the latter class contains all semisimple and many solvable Lie groups.

The question whether every semipolynomial on an arbitrary group is a polynomial (not necessarily of the same degree), is still open; we answer it (affirmatively) only for semipolynomials of degree 1 (Theorem \ref{1S2}).

One more functional class  related to polynomials is the class (QP) of {\it quasipolynomials} -- the functions whose shifts generate finite-dimensional subspaces.  For $G = \mathbb{R}^d$, quasipolynomials are exactly exponential polynomials, that is the sums of products of polynomials and exponential functions $e^{\langle \lambda, x\rangle}$ with $\lambda\in\mathbb{C}^d$.  So in this case (and in many others, for example for all Lie groups) (QP) is much wider than (P). On the other hand there are examples of groups for which the inclusion $(P)\subset (QP)$ fails (see Section 3). We will show that inside the class of quasipolynomials there is no difference between semipolynomials and polynomials.

In Section 2 we study analogues of the discussed classes for general representations of groups and then in Section 3 we deal with functions on groups, applying general results  to the regular representation.

In Section 4 we consider the following question:
is it sufficient to check the conditions (1) and (2) only for $h_i$ from a topologically generating subset of $G$?

For $G = \mathbb{R}^n$ the question was affirmatively  solved by Montel  \cite{montel_1935, montel} in 1935.
Montel's original proofs were tricky, so that in last few years several simpler proofs have been published \cite{AA_AM,A_NFAO,AK_CJM,AS_AM,AS_DM}. Furthermore, these results have been connected to the theory of local polynomials  \cite{AS_AM}, and some versions of them have been demonstrated also for other classes of functions, e.g., for exponential polynomials  \cite{AS_DM}.

We will prove that the results on polynomials extend to general groups: if  $f$ behaves as a polynomial of degree $\leq m$ when we choose the steps $h_i$ from a generating set of $G$, then $f$ is a polynomial on $G$ of the  same degree. The corresponding statement for commutative $G$ was obtained by M. Laczkovich \cite[Lemma 15]{L}. For semipolynomials the situation is more complicated; we get only some partial results and counterexamples.

\bigskip

\section{Representations}

 In general we consider weakly continuous representations on locally convex topological linear spaces. If $G$ is discrete then a topology of the underlying space is not important and we deal with arbitrary representations on linear spaces.

Let $\pi$ be a representation of $G$ on a vector space $X_{\pi}$. An element $x\in X_{\pi}$ is termed {\it $\pi$-semipolynomial of degree  $\leq n$} if
\begin{equation}\label{same}
(\pi(h) - I)^{n+1}x = 0
\end{equation}
 for any $h\in G$. Furthermore $x\in G_{\pi}$ is called  {\it $\pi$-polynomial of degree $\leq n$} if
\begin{equation}\label{different}
(\pi(h_{n+1}) - I)\ldots (\pi(h_2) - I)(\pi(h_1) - I)x = 0,
\end{equation}
 for any $h_1,...,h_{n+1}\in G$.

We use notations   $(SP_n)_{\pi}$, $(P_n)_{\pi}$  for the corresponding classes, and set $(SP)_{\pi}=\bigcup_{n\geq 0}(SP_n)_{\pi}$,  $(P)_{\pi}=\bigcup_{n\geq 0}(P_n)_{\pi}$. It follows from the definition that $(P_0)_{\pi} = (SP_0)_{\pi} = \Phi_{\pi}$, where $\Phi_{\pi}$ is the set of fixed points for $\pi$: $\Phi_{\pi} = \{x\in X: \pi(g)x = x \text{ for all } g\in G\}$.

\begin{lemma}\label{inv} The sets  $(SP_n)_{\pi}$, $(P_n)_{\pi}$, $(SP)_{\pi}$, and $(P)_{\pi}$ are $\pi$-invariant linear subspaces of $X$; moreover $(SP_n)_{\pi}$ and $(P_n)_{\pi}$ are closed in $X$.
\end{lemma}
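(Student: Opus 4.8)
The plan is to establish the four assertions in the order linearity, closedness, $\pi$-invariance, and finally the two unions, since the first three reduce to elementary operator bookkeeping and only one step genuinely uses the group structure.

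First I would record that each defining condition is the vanishing of a fixed continuous linear operator applied to $x$. Writing $D_h=\pi(h)-I$, one has
\[
(SP_n)_{\pi}=\bigcap_{h\in G}\ker D_h^{\,n+1},\qquad
(P_n)_{\pi}=\bigcap_{h_1,\dots,h_{n+1}\in G}\ker\bigl(D_{h_{n+1}}\cdots D_{h_1}\bigr).
\]
Since each $\pi(h)$ is a continuous linear operator on $X$, every $D_h$, and hence every product or power appearing above, is continuous and linear, so its kernel is a closed linear subspace. An arbitrary intersection of closed linear subspaces is again a closed linear subspace, which simultaneously yields that $(SP_n)_{\pi}$ and $(P_n)_{\pi}$ are linear subspaces and that they are closed in $X$.

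The key step is $\pi$-invariance, and it is here that non-commutativity must be handled. Because $\pi$ is a homomorphism with $\pi(g)^{-1}=\pi(g^{-1})$, I would use the conjugation identity
\[
D_h\,\pi(g)=\bigl(\pi(h)-I\bigr)\pi(g)=\pi(hg)-\pi(g)=\pi(g)\bigl(\pi(g^{-1}hg)-I\bigr)=\pi(g)\,D_{g^{-1}hg}.
\]
Moving $\pi(g)$ through a whole product from right to left then gives $D_{h_{n+1}}\cdots D_{h_1}\pi(g)=\pi(g)\,D_{g^{-1}h_{n+1}g}\cdots D_{g^{-1}h_1g}$, and likewise $D_h^{\,n+1}\pi(g)=\pi(g)\,D_{g^{-1}hg}^{\,n+1}$. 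Consequently, if $x$ satisfies the relevant equation for every choice of steps in $G$, so does $\pi(g)x$: the operator $\pi(g)$ does not commute with $D_h$, but conjugation $h\mapsto g^{-1}hg$ keeps the steps inside $G$, so the defining conditions are preserved. This shows $(SP_n)_{\pi}$ and $(P_n)_{\pi}$ are $\pi$-invariant.

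Finally I would treat the unions. Applying one extra factor on the left shows $D_h^{\,n+1}x=0\Rightarrow D_h^{\,n+2}x=0$ and $D_{h_{n+1}}\cdots D_{h_1}x=0\Rightarrow D_{h_{n+2}}\cdots D_{h_1}x=0$, so the chains $(SP_n)_{\pi}\subseteq(SP_{n+1})_{\pi}$ and $(P_n)_{\pi}\subseteq(P_{n+1})_{\pi}$ are increasing. An increasing union of linear subspaces is a linear subspace, and a union of $\pi$-invariant sets is $\pi$-invariant, which gives the assertions for $(SP)_{\pi}$ and $(P)_{\pi}$ (these are not asserted to be closed, consistent with the statement). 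The only non-formal point in the whole argument is the conjugation identity above; I expect that to be the sole place where the non-abelian structure of $G$ enters, everything else being the standard fact that kernels of continuous linear operators are closed subspaces.
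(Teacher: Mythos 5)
Your proof is correct and follows essentially the same route as the paper: closedness via intersections of kernels of continuous operators, and invariance via the conjugation identity $(\pi(h)-I)\pi(g)=\pi(g)(\pi(g^{-1}hg)-I)$. You are slightly more explicit than the paper in verifying that the increasing unions $(SP)_{\pi}$ and $(P)_{\pi}$ inherit linearity and invariance, but that is a matter of detail, not of method.
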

\begin{proof} It is obvious that  $(SP_n)_{\pi}$, $(P_n)_{\pi}$, $(SP)_{\pi}$, and $(P)_{\pi}$ are linear subspaces. Since $(SP_n)_{\pi} = \bigcap_{h\in G}\ker (\pi(h) - I)^{n+1}$, it is closed; similarly for $(P_n)_{\pi}$. Furthermore
if $x\in (SP_n)_{\pi}$ then for each $h,g\in G$, $$\pi(g)^{-1}(\pi(h) - I)^{n+1}\pi(g)x = (\pi(g^{-1}hg) - I)^{n+1}x = 0$$ whence $(\pi(h) - I)^{n+1}\pi(g)x = 0$, which implies that
$\pi(g)x \in (SP_n)_{\pi}$. This proves that $(SP_n)_{\pi}$ is $\pi$-invariant. The invariance of $(P_n)_{\pi}$ can be proved in the same way.
\end{proof}

We study the structure of $(SP)_{\pi}$ and $(P)_{\pi}$ for various groups or representations;  in particular, we are interested in the question when  these spaces  coincide.

\begin{theorem}\label{uno} For an arbitrary representation $\pi$ of a group $G$,
\begin{equation}\label{1-2}
(SP_1)_{\pi} \subset (P_2)_{\pi}.
\end{equation}
\end{theorem}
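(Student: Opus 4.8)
The plan is to pass to an invariant subspace and reduce the statement to a purely algebraic identity between operators restricted to it. Write $D_h = \pi(h) - I$. First I would set $V = (SP_1)_{\pi}$, which by Lemma~\ref{inv} is a $\pi$-invariant (hence $D_h$-invariant) subspace, and observe that on $V$ every operator satisfies $D_h^2|_V = 0$, simply because each $x \in V$ obeys $(\pi(h)-I)^2 x = 0$ for all $h$. Thus it suffices to prove $D_a D_b D_c|_V = 0$ for all $a,b,c \in G$, since then $D_{h_3}D_{h_2}D_{h_1}x = 0$ for $x \in V$, i.e. $x \in (P_2)_{\pi}$. The two structural identities I would record at the outset are the multiplicativity relation $D_{gh} = D_g + D_h + D_g D_h$ (valid for any representation) and, on $V$, the inversion relation $D_{g^{-1}}|_V = -D_g|_V$, which follows from $\pi(g)^{-1} = (I + D_g)^{-1} = I - D_g$ when $D_g^2 = 0$.

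Next I would extract a cubic relation. Since $D_{gh}^2|_V = 0$ and $D_{gh^{-1}}^2|_V = 0$, I would expand both squares using the multiplicativity and inversion identities, discard every term containing $D_g^2$ or $D_h^2$, and subtract the two resulting equations. Several terms cancel and one is left with
\[
D_g D_h + D_h D_g + D_g D_h D_g = 0 \quad \text{on } V .
\]
The decisive trick is then to left-multiply this identity by $D_g$: the first and third terms acquire a factor $D_g^2 = 0$, leaving $D_g D_h D_g = 0$. Feeding this back gives the anticommutation rule $D_g D_h = -D_h D_g$ on $V$, valid for all $g,h \in G$.

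Finally I would exploit that $D_{gh}$ is itself one of the operators $D_k$ (it corresponds to the group element $gh$), so the anticommutation rule applies to the pair $(D_{gh}, D_k)$. Expanding $D_{gh}D_k = -D_k D_{gh}$ via $D_{gh} = D_g + D_h + D_g D_h$ and cancelling the pairwise anticommutators of the linear terms yields $D_g D_h D_k + D_k D_g D_h = 0$. On the other hand, applying pairwise anticommutation three times gives $D_g D_h D_k = D_k D_g D_h$. Comparing the two relations forces $2\,D_k D_g D_h = 0$, hence $D_a D_b D_c = 0$ on $V$ for all $a,b,c$, which is exactly the claim.

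The main obstacle is manufacturing the clean cubic identity: it genuinely requires the inversion relation, since the product and opposite-product relations $D_{gh}^2 = D_{hg}^2 = 0$ alone yield only quartic information, and the final cancellation depends on the somewhat delicate bookkeeping of three-fold products. It is worth noting why closure under the group law is essential here: anticommuting square-zero operators by themselves (as in an exterior algebra) need not have vanishing triple products, but in our setting $D_{gh}$ must \emph{also} anticommute with $D_k$, and this extra constraint is precisely what kills $D_g D_h D_k$.
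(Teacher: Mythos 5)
Your proof is correct and follows essentially the same route as the paper: both restrict to the invariant subspace $(SP_1)_{\pi}$, use the relation $D_{g^{-1}}=-D_g$ there to establish the anticommutation rule $D_gD_h=-D_hD_g$, and then exploit closure under the group law (equivalently, that the linear span of the $D_g$ is an algebra of pairwise anticommuting operators) to kill all triple products. The differences are only computational --- the paper derives anticommutativity by combining the identity $\rho(g)-2I+\rho(g^{-1})=0$ written for $g$, $h$ and $gh^{-1}$, rather than by expanding $D_{gh}^2=D_{gh^{-1}}^2=0$ as you do (both expansions check out) --- and one trivial slip: your cyclic identity $D_gD_hD_k=D_kD_gD_h$ comes from two transpositions, not three.
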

\begin{proof} Let $X_0 = (SP_1)$ and $\rho = \pi|_{X_0}$. For $g\in G$, set $\delta(g) = \rho(g) - 1$.
First we show that for any $g, h \in G$
\begin{equation}\label{anti}
\delta_g\delta_h = -\delta_h\delta_g .
\end{equation}
Indeed, we have, for each $g \in G$,
\begin{equation*}
\rho(g)^2 -2\rho(g) +I=0.
\end{equation*}
Applying to both sides the operator $\rho(g)^{-1}$ we obtain the relation
\begin{equation}\label{relation}
\rho(g) - 2I + \rho(g)^{-1} = 0, \qquad \text{for\ any\ } g \in G.
\end{equation}
Now we write this relation for elements $g$ and $h$ of the group, multiply it, respectively, by $\rho(h)$ and $\rho(g)$ and then sum up:
\begin{equation*}
\rho(h)(\rho(g) - 2I+ \rho(g^{-1})) + \rho(g)(\rho(h)- 2I + \rho(h^{-1}))=0.
\end{equation*}
Rearranging we obtain
\begin{equation}\label{collect}
\begin{split}
(\rho(gh^{-1})+\rho(hg^{-1})-2I) + & (\rho(gh)-\rho(g) -\rho(h)+I)+ \\
& (\rho(hg)-\rho(h)-\rho(g)+I)=0
\end{split}
\end{equation}
In view of (\ref{relation}) the first summand in (\ref{collect}) vanishes and we get the anticommutativity relation (\ref{anti}):
\begin{equation*}
(\rho(g)-I)(\rho(h)-I)+(\rho(h)-I)(\rho(g) -I)=0.
\end{equation*}

Let us consider the set $\mathcal{M} = \{\delta_g,\ g \in G\}$. Since
\begin{equation}\label{main}
\delta_g\delta_h = \delta_{gh} - \delta_g - \delta_h, \qquad \text{for\ any\ } g,h \in G,
\end{equation}
the linear hull $\mathcal{A}$ of $\mathcal{M}$ is an algebra. In view of (\ref{anti}) each two operators $A,B \in \mathcal{A}$  anticommute: $AB = - BA$. Therefore, for each $\ T_1,T_2,T_3 \in \mathcal{A}$,
$$
T_1T_2T_3 = - T_3T_1T_2 = T_2T_3T_1 = - T_1T_2T_3,
$$
which implies that
$$
T_1T_2T_3 = 0, \text{ for any  } T_1,T_2,T_3 \in G.
$$
In particular, $\delta_{g_1}\delta_{g_2}\delta_{g_3} x = 0$,
for all $g_i\in G$, $x\in (SP_1)\pi$. This means that $(SP_1)_\pi \subset (P_2)_{\pi}$.
\end{proof}

\bigskip

\begin{theorem}\label{finite}
If $\dim X_{\pi} < \infty$ then $(SP)_{\pi} =(P)_{\pi}$.
\end{theorem}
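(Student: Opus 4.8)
The plan is to prove the nontrivial inclusion $(SP)_{\pi} \subseteq (P)_{\pi}$, the reverse being automatic. Write $Y = (SP)_{\pi}$; by Lemma~\ref{inv} it is a $\pi$-invariant subspace, and by hypothesis $\dim Y \le \dim X_{\pi} < \infty$. Put $\rho = \pi|_Y$ and $\delta_h = \rho(h) - I$. First I would check that each $\rho(h)$ acts as a \emph{unipotent} operator on $Y$. Indeed, fix $h$ and let $\lambda$ be an eigenvalue of $\rho(h)|_Y$ (one exists over $\C$) with eigenvector $v$; then $\delta_h^{\,m+1} v = (\lambda-1)^{m+1} v$, which is nonzero for every $m$ unless $\lambda = 1$, whereas $v \in Y = (SP)_{\pi}$ forces $\delta_h^{\,m+1} v = 0$ for some $m$. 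Hence every eigenvalue of $\rho(h)|_Y$ equals $1$, i.e. $\rho(h)$ is unipotent.

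Thus the whole difficulty reduces to a statement about the group of unipotent operators $\rho(G) \subseteq GL(Y)$ on a finite-dimensional complex space. What I want is a complete flag $0 = V_0 \subset V_1 \subset \cdots \subset V_d = Y$ (with $d = \dim Y$) that is $\rho$-invariant and satisfies $\delta_h V_i \subseteq V_{i-1}$ for all $h$ and all $i$; this is exactly the conclusion of Kolchin's theorem on unipotent matrix groups. Granting such a flag, for any $h_1, \dots, h_d \in G$ one telescopes
\[
\delta_{h_d}\cdots\delta_{h_1} Y \subseteq \delta_{h_d}\cdots\delta_{h_2} V_{d-1} \subseteq \cdots \subseteq V_0 = 0,
\]
so every vector of $Y$ is a $\pi$-polynomial of degree $\le d-1$. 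This gives $Y \subseteq (P_{d-1})_{\pi} \subseteq (P)_{\pi}$, hence the desired equality.

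To build the flag it suffices, by induction on $\dim Y$, to show that the space $\Phi_{\rho}$ of $\rho$-fixed vectors in $Y$ is nonzero whenever $Y \ne 0$: one then sets $V_1 = \Phi_{\rho}$, passes to the quotient representation on $Y/V_1$ (again by unipotent operators), and lifts the inductively obtained flag. Producing a common fixed vector is the main obstacle, and I would handle it with a Burnside--trace argument. Choose a minimal nonzero $\rho$-invariant subspace $W \subseteq Y$, so $\rho|_W$ is irreducible. Each $\rho(h)|_W$ is unipotent, hence $\mathrm{tr}\,\rho(h)|_W = \dim W$, and therefore for all $g,h$
\[
\mathrm{tr}\big(\rho(g)\,\delta_h\big)\big|_W = \mathrm{tr}\,\rho(gh)|_W - \mathrm{tr}\,\rho(g)|_W = 0 .
\]
By Burnside's theorem the operators $\rho(g)|_W$ span $\mathrm{End}(W)$ (irreducibility over $\C$), so $\mathrm{tr}\big(T\,\delta_h|_W\big) = 0$ for every $T \in \mathrm{End}(W)$, forcing $\delta_h|_W = 0$ for all $h$. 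Thus $W \subseteq \Phi_{\rho}$, in particular $\Phi_{\rho} \ne 0$, which closes the induction. The only genuinely nonelementary input is Burnside's theorem (equivalently, Kolchin's theorem itself); everything else is linear-algebra bookkeeping.
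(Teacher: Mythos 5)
Your proof is correct and follows essentially the same route as the paper's: both reduce the statement to producing a complete invariant flag on $(SP)_{\pi}$ with respect to which all the differences $\rho(h)-I$ act strictly triangularly, and both obtain that flag from a Burnside--trace argument, after which the telescoping $\mathcal{A}^{N}=0$ (equivalently your $\delta_{h_d}\cdots\delta_{h_1}Y=0$) gives $(SP)_{\pi}\subseteq(P_{N-1})_{\pi}$. The only cosmetic difference is that the paper triangularizes the traceless algebra spanned by the operators $\rho(g)-I$, whereas you triangularize the unipotent group $\rho(G)$ directly (in effect reproving Kolchin's theorem); the underlying mechanism is identical.
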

\begin{proof} It follows from Lemma \ref{inv}  that the restriction $\rho$ of $\pi$ to $(SP)_{\pi}$ is well defined and acts  by operators with one-element spectra: $\sigma(\rho(g)) = \{1\}$.

Set, as in the proof of Theorem \ref{uno},  $\delta(g) = \rho(g) - I$  and let $\mathcal{A}$ denote the linear span of all $\delta(g)$, $g\in G$. Then $\mathcal{A}$ is an algebra of operators by (\ref{main}).  Since all elements $\delta(g)$ are nilpotent, $trace(\delta(g)) = 0$ and therefore $trace (T) = 0$ for each $T\in L$. It follows that $\mathcal{A}$ is a proper subalgebra of the algebra $\mathcal{L}(X)$ of all linear operators on $X$; by Burnside's Theorem it has non-trivial invariant subspaces (if $\dim X > 1$). Let $X_1\subset X_2$ be two invariant subspaces for $\mathcal{A}$, then the algebra $\mathcal{A}_1$ of all operators induced by operators of $\mathcal{A}$ on $X_2/X_1$ is also linearly generated by nilpotent operators, and the same argument as above shows that either $\dim X_2/X_1 =1$ or there exists a non-trivial invariant subspace for $\mathcal{A}_1$. In other words there exists an invariant subspace for $\mathcal{A}$ between $X_1$ and $X_2$. This shows that $\mathcal{A}$ can be triangularized. Since operators $\delta(g)$ are nilpotent,
all operators in $\mathcal{A}$ are strictly triangular. It follows immediately that $\mathcal{A}^N = 0$ where $N = \dim (SP)_{\pi}$. Therefore $(SP)_{\pi} \subset (P_N)_{\pi}$.
\end{proof}

\begin{corollary}\label{irr}
If $\pi$ is a non-trivial irreducible finite-dimensional representation then $(SP)_{\pi} = \{0\}$.
\end{corollary}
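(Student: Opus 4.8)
The plan is to combine the $\pi$-invariance of $(SP)_\pi$ supplied by Lemma~\ref{inv} with the equality $(SP)_\pi = (P)_\pi$ from Theorem~\ref{finite}, and then to exploit the triangularization carried out in the proof of the latter. First I would observe that, by Lemma~\ref{inv}, the subspace $(SP)_\pi$ is $\pi$-invariant; since $\pi$ is irreducible, this forces $(SP)_\pi \in \{\{0\}, X_\pi\}$. Thus the whole task reduces to ruling out the possibility $(SP)_\pi = X_\pi$.

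So I would assume for contradiction that $(SP)_\pi = X_\pi$. Then the restriction $\rho$ appearing in the proof of Theorem~\ref{finite} is all of $\pi$, every operator $\delta(g) = \pi(g) - I$ is nilpotent, and the algebra $\mathcal{A}$ linearly generated by the $\delta(g)$ can be simultaneously (strictly) triangularized. In particular there is a complete flag of $\mathcal{A}$-invariant subspaces, whose first nonzero member $V$ is a one-dimensional $\mathcal{A}$-invariant subspace, hence proper and nonzero as soon as $\dim X_\pi > 1$. Since $\pi(g) = I + \delta(g)$, any $\delta(g)$-invariant subspace is automatically $\pi(g)$-invariant, so $V$ is a nontrivial proper $\pi$-invariant subspace, contradicting irreducibility in the case $\dim X_\pi > 1$.

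It then remains to dispose of the case $\dim X_\pi = 1$. Here $\pi$ is a character $\chi \colon G \to \mathbb{C}^{*}$, and the hypothesis $(SP)_\pi = X_\pi$ says that each $\pi(g) = \chi(g)$ is unipotent, i.e. $\chi(g) = 1$ for all $g$; this makes $\pi$ trivial, contrary to assumption. Combining the two cases gives $(SP)_\pi = \{0\}$.

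The step I expect to be the crux is the transition from ``$X_\pi$ consists entirely of semipolynomials'' to the existence of a genuine proper $\pi$-invariant subspace. The substantive input, however, is already available from the proof of Theorem~\ref{finite}: a finite-dimensional algebra linearly spanned by nilpotent operators is triangularizable via Burnside's theorem. Granting that, the remaining obstacle is merely to note that the invariant flag for $\mathcal{A}$ transfers verbatim to $\pi$, which is immediate from the identity $\pi(g) = I + \delta(g)$ together with the invertibility of $\pi(g)$.
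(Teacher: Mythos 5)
Your argument is correct and follows essentially the same route as the paper: both reduce to the dichotomy $(SP)_\pi\in\{\{0\},X_\pi\}$ via invariance and irreducibility, then invoke the triangularization of $\mathcal{A}$ from the proof of Theorem~\ref{finite} to conclude that $\pi$ itself would be triangularizable, which forces $\dim X_\pi=1$ and then triviality since a unipotent scalar equals $1$. Your write-up merely makes explicit the steps the paper compresses into one sentence (the passage from an $\mathcal{A}$-invariant line to a $\pi$-invariant one via $\pi(g)=I+\delta(g)$), so there is nothing to correct.
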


\begin{proof}
The previous proof shows that the restriction of $\pi$ to $(SP)_{\pi}$ is triangularizable.  If $(SP)_{\pi} \neq \{0\}$, then  $(SP)_{\pi} = X_{\pi}$ and $\pi$ is triangularizable; this is possible only if $\dim X_{\pi} = 1$ and $\pi$ is trivial. Thus $\ (SP)_{\pi} = \{0\}$.
\end{proof}

Let us denote by $(F)_{\pi}$ the set of all $x\in X$  such that $\pi(G)x$ is contained in a finite-dimensional subspace. In other words $(F)_{\pi}$ is the union of all invariant finite-dimensional subspaces of $X$.

\begin{corollary}\label{fin}
(i) $(SP)_{\pi}\cap (F)_{\pi} \subset (P)_{\pi}$.

(ii) If $G$ is topologically finitely generated  then $(SP)_{\pi}\cap (F)_{\pi} = (P)_{\pi}$.
\end{corollary}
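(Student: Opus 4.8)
The plan is to treat the two inclusions separately, using Theorem \ref{finite} for part (i) and a direct finiteness estimate for part (ii). For part (i), I would start from $x \in (SP)_{\pi} \cap (F)_{\pi}$ and use the definition of $(F)_{\pi}$ to produce a finite-dimensional $\pi$-invariant subspace $V$ with $x \in V$, for instance $V = \span \pi(G)x$. Writing $\rho = \pi|_V$, the hypothesis $x \in (SP_m)_{\pi}$ restricts to $x \in (SP_m)_{\rho}$, because $V$ is invariant and so the operators $(\rho(h)-I)^{m+1}$ are exactly the ambient ones computed inside $V$. Since $\dim V < \infty$, Theorem \ref{finite} gives $(SP)_{\rho} = (P)_{\rho}$, whence $x \in (P_{m'})_{\rho}$ for some $m'$. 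Invariance of $V$ once more lets me identify the vanishing of $(\rho(h_{m'+1})-I)\cdots(\rho(h_1)-I)x$ with that of the corresponding product of the operators $\pi(h_i)-I$, giving $x \in (P_{m'})_{\pi} \subseteq (P)_{\pi}$.

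For part (ii), the inclusion $(SP)_{\pi} \cap (F)_{\pi} \subseteq (P)_{\pi}$ is exactly part (i), so only $(P)_{\pi} \subseteq (SP)_{\pi} \cap (F)_{\pi}$ remains. The containment $(P)_{\pi} \subseteq (SP)_{\pi}$ is immediate on setting $h_1 = \cdots = h_{n+1} = h$ in \eqref{different}. The substance is therefore $(P)_{\pi} \subseteq (F)_{\pi}$: every $\pi$-polynomial must lie in a finite-dimensional invariant subspace. I would fix $x \in (P_n)_{\pi}$, choose topological generators $a_1,\ldots,a_r$ of $G$, put $\delta_i = \pi(a_i) - I$, and introduce the finite-dimensional space
\[
W_0 = \span\{\,\delta_{i_1}\cdots\delta_{i_s}x : 0 \le s \le n,\ 1 \le i_j \le r\,\},
\]
whose dimension is at most $\sum_{s=0}^{n} r^{s}$.

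The key step is to verify that $W_0$ is invariant under $\pi(a_i)^{\pm 1}$ for every $i$. By Lemma \ref{inv}, $(P_n)_{\pi}$ is a $\pi$-invariant linear subspace, hence stable under each $\delta_i$; since it contains $x$, all generating vectors of $W_0$ lie in $(P_n)_{\pi} \subseteq (SP_n)_{\pi}$, so that $\delta_i^{n+1}$ annihilates $W_0$. Invariance under $\pi(a_i) = I + \delta_i$ is then clear, because $\delta_i$ sends a word of length $s < n$ to one of length $s+1$ (still in $W_0$) and a word of length $n$ to a product of $n+1$ difference operators applied to $x$, which vanishes. As $\delta_i^{n+1}|_{W_0} = 0$, the inverse $\pi(a_i)^{-1}$ acts on $W_0$ as the finite polynomial $\sum_{t=0}^{n}(-1)^{t}\delta_i^{t}$, which again preserves $W_0$. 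Consequently $\pi(g)x \in W_0$ for every $g$ in the subgroup $G_0$ generated by $a_1,\ldots,a_r$. To pass from $G_0$ to $G$ I would invoke weak continuity: $W_0$, being finite-dimensional, is weakly closed, and $g \mapsto \pi(g)x$ is weakly continuous, so $\pi(g)x \in W_0$ persists for all $g$ in the closure $G = \overline{G_0}$. Hence $\span \pi(G)x \subseteq W_0$ is a finite-dimensional invariant subspace containing $x$, i.e.\ $x \in (F)_{\pi}$.

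The main obstacle is precisely this finiteness argument in (ii): one must make rigorous that, owing to the degree bound supplied by $(P_n)_{\pi}$, an arbitrary group element acts on $x$ through only finitely many ``monomials'' $\delta_{i_1}\cdots\delta_{i_s}$ in the chosen generators, and then transfer the resulting membership from the dense subgroup $G_0$ to all of $G$ by weak continuity. It is the relation \eqref{main} (ensuring that the $\delta_g$ span an algebra) that underlies the fact that these monomials span an invariant space at all, while the nilpotency $\delta_i^{n+1}|_{W_0}=0$ is what keeps the spanning set finite and handles the inverses $\pi(a_i)^{-1}$.
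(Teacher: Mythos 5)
Your proof is correct. Part (i) is the paper's argument verbatim: restrict $\pi$ to a finite-dimensional invariant subspace containing $x$ and apply Theorem \ref{finite}. In part (ii) you diverge from the paper in how the finite-dimensional invariant subspace containing a given $x\in (P_n)_{\pi}$ is produced. The paper argues by induction on the degree $n$: each $(\pi(h_i)-1)x$ lies in $(P_{n-1})_{\pi}$, hence (inductively) in some finite-dimensional invariant $W(h_i)$, and one takes $W=\mathbb{C}x+\sum_i W(h_i)$, which is stable under the generators and hence, by finite-dimensionality and density, under all of $\pi(G)$. You instead give a direct, non-inductive construction, $W_0=\span\{\delta_{i_1}\cdots\delta_{i_s}x: 0\le s\le n\}$, using the degree bound from \eqref{different} to truncate the monomials at length $n$ and the nilpotency of $\delta_i|_{W_0}$ to invert $\pi(a_i)$ by a finite geometric series; the passage from the dense subgroup $G_0$ to $G$ via weak closedness of a finite-dimensional subspace is the same in both. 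Your version buys an explicit dimension bound $\sum_{s=0}^{n}r^{s}$ and makes the stability under $\pi(a_i)^{-1}$ fully explicit (the paper leaves this to the standard fact that an injective endomorphism of a finite-dimensional invariant subspace is onto), at the cost of a slightly longer verification; the paper's induction is shorter but less quantitative. Both routes rest on the same two pillars: the $\pi$-invariance of $(P_n)_{\pi}$ from Lemma \ref{inv}, and the density-plus-closedness argument.
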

\begin{proof} (i) If $x\in (SP)_{\pi}\cap (F)_{\pi}$ let $Y$ be a finite-dimensional subspace of $X$ with $x\in Y$. It follows that $x\in (SP)_{\rho}$ where $\rho = \pi|_Y$. By Theorem \ref{finite}, $x\in (P)_{\rho}\subset (P)_{\pi}$.

(ii) It suffices to show that if $G$ is topologically finitely generated and $x\in (P_n)_{\pi}$ then $x\in (F)_{\pi}$. We will do it by induction on $n$. For $n=0$ this is obvious because $(P_0) = \Phi_{\pi}$.  Suppose that it is true for $n\le m-1$, and let $x\in (P_m)_{\pi}$. Then, for every $g\in G$, $(\pi(g) - 1)x\in (P_{m-1})_{\pi} \subset (F)_{\pi}$, in other words each $(\pi(g) - 1)x$ is contained in a finite-dimensional invariant subspace $W(g)$ of $X$. If elements $h_1,...,h_k$ topologically generate $G$ then the subspace $W = \mathbb{C}x + \sum_{i=1}^kW(h_i)$ is finite-dimensional and invariant with respect to all operators $\pi(h_i)$. Therefore $W$ is invariant under all elements of the subgroup $G_0$ of $G$ generated by all $h_i$. Since $G_0$ is dense in $G$ and $W$ is closed, $W$ is invariant for $\pi(G)$. It follows that $x$ is contained in a finite-dimensional invariant subspace, so $x\in (F)_{\pi}$.
\end{proof}

Part (ii) of Corollary \ref{fin} admits an extension to groups which are direct limits of topologically finitely generated groups.

\begin{theorem}\label{dirlim} Let $G = \overline{\cup_{\lambda\in\Lambda}G_{\lambda}}$ where $\lambda\mapsto G_{\lambda}$ is an up-directed net of subgroups, and let each $G_{\lambda}$ be topologically generated by $k_{\lambda}$ elements with $\sup_{\lambda}k_{\lambda} = \nu <  \infty$.  Then $(P)_{\pi} \subset (F)_{\pi}$ and therefore $(SP)_{\pi}\cap (F)_{\pi} = (P)_{\pi}$, for any representation $\pi$  of $G$.
\end{theorem}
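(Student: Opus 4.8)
The plan is to reduce the statement to the single inclusion $(P)_{\pi}\subset (F)_{\pi}$, since the displayed equality then follows at once. Indeed, the reverse inclusion $(SP)_{\pi}\cap (F)_{\pi}\subset (P)_{\pi}$ is Corollary \ref{fin}(i), while $(P)_{\pi}\subset (SP)_{\pi}$ is immediate (put all $h_i$ equal in \eqref{different}, which turns it into \eqref{same}); combining these with $(P)_{\pi}\subset (F)_{\pi}$ gives $(P)_{\pi}\subset (SP)_{\pi}\cap (F)_{\pi}$, hence equality. Because $(P)_{\pi}=\bigcup_m (P_m)_{\pi}$, it suffices to fix $m$ and an element $x\in (P_m)_{\pi}$ and to exhibit a finite-dimensional $\pi$-invariant subspace containing $x$.

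The heart of the argument is a dimension bound for the cyclic spaces over the finitely generated pieces that is \emph{uniform} in $\lambda$. Fix $\lambda$ together with generators $a_1,\dots,a_{k_\lambda}$ of $G_\lambda$ (so $k_\lambda\le \nu$), and let $H_\lambda$ be the abstract subgroup they generate, which is dense in $G_\lambda$. Writing $\delta_g=\pi(g)-I$ and expanding $\pi(g)=\prod_j(I+\delta_{s_j})$ for a word $g=s_1\cdots s_l$ in the letters $a_i^{\pm1}$, the vector $\pi(g)x$ becomes a sum of ordered products $\delta_{s_{j_1}}\cdots\delta_{s_{j_r}}x$. Every summand with $r\ge m+1$ vanishes, because its $m+1$ rightmost factors form a product $\delta_{s}\cdots\delta_{s}x=0$ with all $s\in G$ by \eqref{different}. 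Hence $\pi(H_\lambda)x$ lies in the span of the finite set of monomials $\delta_{u_1}\cdots\delta_{u_r}x$ with $0\le r\le m$ and $u_t\in\{a_1^{\pm1},\dots,a_{k_\lambda}^{\pm1}\}$, a set of cardinality at most $D:=\sum_{r=0}^{m}(2\nu)^r$, which does not depend on $\lambda$. Passing from $H_\lambda$ to its closure $G_\lambda$ by weak continuity of $\pi$ (a finite-dimensional subspace being closed and weakly closed), we conclude that $V_\lambda:=\mathrm{span}\,\pi(G_\lambda)x$ satisfies $\dim V_\lambda\le D$ for every $\lambda$.

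It then remains to glue the pieces. For $\lambda\le\mu$ we have $G_\lambda\subset G_\mu$, hence $V_\lambda\subset V_\mu$, so $\{V_\lambda\}$ is an increasing net of subspaces of dimension at most $D$. The integers $\dim V_\lambda$ thus attain their maximum at some $\lambda_0$, and directedness forces $V_\lambda\subset V_{\lambda_0}=:V$ for all $\lambda$; in particular $V$ is finite-dimensional and $\pi\big(\bigcup_\lambda G_\lambda\big)x\subset V$. Since $\bigcup_\lambda G_\lambda$ is dense in $G$ and $V$ is (weakly) closed, weak continuity of $\pi$ yields $\pi(G)x\subset V$, so $\mathrm{span}\,\pi(G)x$ is finite-dimensional and $\pi$-invariant and $x\in (F)_{\pi}$. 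This establishes $(P)_{\pi}\subset (F)_{\pi}$ and hence the theorem.

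I expect the only delicate point to be the uniform dimension bound $D$, and this is exactly where the hypothesis $\sup_\lambda k_\lambda=\nu<\infty$ is indispensable: without it the increasing union $\bigcup_\lambda V_\lambda$ could fail to be finite-dimensional even though each $V_\lambda$ is. The word-expansion step, which rewrites $\pi(g)x$ as a bounded number of bounded-length products of the basic difference operators $\delta_{a_i^{\pm1}}$, is what converts ``finitely generated with a uniform bound'' into a genuine bound on $\dim V_\lambda$; the remaining ingredients (monotonicity, stabilization of the net, and the density/continuity passage to $G$) are routine.
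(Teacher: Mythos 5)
Your proof is correct, and it reaches the uniform dimension bound --- the crux of the theorem --- by a genuinely different route. The paper reduces to $(P)_\pi\subset(F)_\pi$ exactly as you do, but then proceeds by induction on the degree $n$, reusing the scheme of Corollary \ref{fin}(ii): each $(\pi(h_i)-1)x$ lies in a $\pi(G)$-invariant subspace $W(h_i)$ of dimension at most $\phi(n-1,\nu)$ by the inductive hypothesis, so $W_\lambda=\mathbb{C}x+\sum_i W(h_i)$ is $\pi(G_\lambda)$-invariant of dimension at most $N=1+\nu\phi(n-1,\nu)$, and the net of these subspaces stabilizes. You instead bypass the induction entirely: expanding $\pi(s_1\cdots s_l)=\prod_j(I+\delta_{s_j})$ and killing all monomials of length at least $m+1$ via \eqref{different} gives the explicit bound $\dim\operatorname{span}\pi(G_\lambda)x\le\sum_{r=0}^m(2\nu)^r$ in one stroke. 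Your version buys an explicit, non-recursive bound and, perhaps more importantly, a cleaner gluing step: your spaces $V_\lambda=\operatorname{span}\pi(G_\lambda)x$ are manifestly increasing along the net, whereas the paper's $W_\lambda$ depend on a choice of generators for each $G_\lambda$, so the assertion that $\lambda\mapsto W_\lambda$ is up-directed requires the same silent replacement by the cyclic subspaces that you make explicitly. The paper's inductive formulation has the mild advantage of applying verbatim the machinery already set up for Corollary \ref{fin}, but your word-expansion argument is self-contained and equally rigorous; the density/weak-continuity passages from $H_\lambda$ to $G_\lambda$ and from $\bigcup_\lambda G_\lambda$ to $G$ are handled correctly in both.
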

\begin{proof}
We have only to prove that $(P)_{\pi}\subset (F)_{\pi}$.  We will show that each $x\in (P_n)_{\pi}$ belongs to an invariant subspace $L$ with $\dim L \le \phi(n,\nu)$, where $\phi$ is some function. Using induction suppose that this is proved for polynomials of degree $\le n-1$. Arguing as in the proof of Corollary \ref{fin}, we fix $\lambda\in \Lambda$, choose the generators $h_1,...,h_{k_{\lambda}}$ of $G_{\lambda}$ and consider the space $W_{\lambda} = \mathbb{C}x + \sum_{i=1}^kW(h_i)$. This space is invariant under $\pi(G_{\lambda})$ and its dimension does not exceed $N := 1 + \nu \phi(n-1,\nu)$. Since the net $\lambda\to W_{\lambda}$ is up-directed, it stabilizes and the subspace $W = \cup_{\lambda\in \Lambda}W_{\lambda}$ is a finite-dimensional $\pi$-invariant subspace. Thus $x\in (F)_{\pi}$ and we may set $\phi(n,\nu) = N$.
\end{proof}

As an example of a direct limit of the above form one can consider the additive group of the field $\mathbb{Q}$. Indeed let $\Lambda = \mathbb{N}$ ordered by the divisibility condition: $m \prec k$ if $m|k$. For each $m\in \Lambda$, let $G_m = \{t\in \mathbb{Q}: mt \in \mathbb{Z}\}$. Then the net $m\mapsto G_m$ is up-directed and $\cup_mG_m = \mathbb{Q}$. Furthermore each $G_m$ is isomorphic to $\mathbb{Z}$ and therefore 1-generated.

\medskip

An element $h$ of a topological group is called {\it compact} if the closure of the cyclic subgroup generated by $h$ is compact. Equivalently $h$ is compact if it is contained in a compact subgroup. Let us denote by $G_c$ the closed subgroup of $G$ generated by all compact elements of $G$. It is clear that $G_c$ is normal.

\begin{lemma}\label{precomp}
$\pi(g)x = x$, for all $g\in G_c$ and $x\in (SP)_{\pi}$.
\end{lemma}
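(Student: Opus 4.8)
The plan is to prove the statement first for a single compact element and then to bootstrap to all of $G_c$ by a closed-subgroup argument. For a fixed $x\in (SP)_{\pi}$, I would introduce the stabilizer $H_x=\{g\in G:\pi(g)x=x\}$. It is a subgroup of $G$ because $\pi$ is a representation, and it is closed: for each continuous functional $\phi\in X_{\pi}^{*}$ the set $\{g:\phi(\pi(g)x)=\phi(x)\}$ is closed by weak continuity, and $H_x$ is the intersection of these sets over all $\phi$. Hence, once I know that every compact element lies in $H_x$, it follows that $H_x$ contains the subgroup generated by the compact elements and, being closed, also its closure $G_c$. This reduces the lemma to showing $\pi(h)x=x$ for an arbitrary compact element $h$.

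So I would fix a compact $h$ and write $T=\pi(h)$ and $N=T-I$. Since $x\in(SP)_{\pi}$ we have $x\in(SP_n)_{\pi}$ for some $n$, that is $N^{n+1}x=0$. Expanding $T^{k}=(I+N)^{k}$ by the binomial theorem and using $N^{n+1}x=0$ gives, for every integer $k\ge 0$,
\[
\pi(h^{k})x=T^{k}x=\sum_{j=0}^{n}\binom{k}{j}N^{j}x .
\]
Consequently, for each $\phi\in X_{\pi}^{*}$ the scalar function $k\mapsto \phi(T^{k}x)=\sum_{j=0}^{n}\binom{k}{j}\phi(N^{j}x)$ is a polynomial in $k$; since the binomials $\binom{k}{j}$ $(j=0,\dots,n)$ are linearly independent with $\deg\binom{k}{j}=j$, the degree of this polynomial equals $\max\{j:\phi(N^{j}x)\neq 0\}$.

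The key point is to use compactness of $h$ to force this polynomial to be constant. Let $K=\overline{\langle h\rangle}$, which is compact by hypothesis. The orbit map $g\mapsto\pi(g)x$ is weakly continuous, so $\pi(K)x$ is weakly compact, hence weakly bounded; in particular $\{T^{k}x:k\ge 0\}\subset\pi(K)x$ is weakly bounded, so $k\mapsto\phi(T^{k}x)$ is a bounded sequence for every $\phi$. A polynomial that stays bounded on $\mathbb{N}$ must be constant, which forces $\phi(N^{j}x)=0$ for all $j\ge 1$. As $\phi$ ranges over $X_{\pi}^{*}$ and functionals separate points (Hahn--Banach in the locally convex case; coordinate functionals when $G$ is discrete and $X_{\pi}$ carries no topology), I conclude $N^{j}x=0$ for $j\ge 1$, in particular $(T-I)x=0$, i.e. $\pi(h)x=x$. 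Combined with the first paragraph this yields $\pi(g)x=x$ for all $g\in G_c$.

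The heart of the argument, and the only genuine obstacle, is this confrontation between the purely algebraic polynomial growth of $T^{k}x$ in $k$ and the boundedness forced by compactness of $\overline{\langle h\rangle}$; everything else is routine bookkeeping (the closedness of $H_x$, the reduction to one compact element, and the discrete specialization, where ``compact'' means finite order so that $T^{k}x$ is simply periodic and the same boundedness conclusion holds).
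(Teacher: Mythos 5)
Your argument is correct and is essentially the paper's own proof: both hinge on the same confrontation between the polynomial expression $\langle\pi(h)^{k}x,y\rangle=\sum_{j=0}^{n}\binom{k}{j}\langle(\pi(h)-I)^{j}x,y\rangle$ and its boundedness coming from compactness of $\overline{\langle h\rangle}$, forcing all coefficients with $j\ge 1$ to vanish. Your packaging of the extension step as the closed stabilizer subgroup $H_x$ is just a tidier phrasing of the paper's ``products of compact elements, then limits of nets'' argument, not a different method.
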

\begin{proof} First let $g$ be compact. Then for every $y\in X^*$, the function $f(g) = \langle\pi(g)x,y\rangle$ is bounded on the closure of $\{g^k: k\in \mathbb{N}\}$, whence the sequence $a_k = \langle\pi(g)^kx,y\rangle$ is bounded. On the other hand, setting $\delta(g) = \pi(g) - 1$ we have that $\delta(g)^nx = 0$, for some $n$, and therefore $a_k = \langle(1 + \delta(g))^kx,y\rangle = \sum_{j=0}^n\binom{k}j\langle\delta(g)^jx,y\rangle$, a polynomial in $k$. It follows easily that $\langle\delta(g)^jx,y\rangle = 0$, for all $j>0$, and in particular,  $\ \langle\delta(g)x,y\rangle = 0$. Since $y$ is arbitrary, $\delta(g) x = 0$, $\pi(g)x = x$.

It follows that $\pi(g)x = x$ for each $g = g_1g_2...g_k$, where all $g_i$ are compact. Since each $g\in G_c$ is the limit of a net of such elements we get that $\pi(g)x = x$ for all $g\in G_c$.
\end{proof}

Note that in many cases $G_c = G$.

\begin{example}\label{semis}
 Let $G$ be a connected complex semisimple Lie group, and $\mathfrak{g}$ be its Lie algebra. To prove that $G = G_c$ we may assume that $G$ is  simply connected, because otherwise $G$ is a quotient of a group with this property (the class of groups topologically generated by compact elements is obviously closed under quotients). Denoting by $H$ the connected component of $G_c$ containing the unit, we have that $H$ is a closed normal subgroup of $G$. By \cite[Theorem II.2.3]{helgason}, $H$ is a Lie group, and its Lie algebra $\mathfrak{h}\subset \mathfrak{g}$ contains  all elements $X\in \mathfrak{g}$ such that ${\rm Exp}(tX)\in H$, for $t\in \mathbb{R}$. If $\mathfrak{g} = \mathfrak{n} + i\mathfrak{n}$ is the Weyl decomposition of $\mathfrak{g}$, then clearly the compact form $\mathfrak{n}$ of $\mathfrak{g}$ is contained in $\mathfrak{h}$ (because ${\rm Exp}(\mathfrak{n})$ consists of compact elements of $G$). Since $H$ is normal, $\mathfrak{h}$ is an ideal of $\mathfrak{g}$, so it is the sum of several simple components of $\mathfrak{g}$. So if $\mathfrak{h}\neq \mathfrak{g}$ then there is a component of $\mathfrak{g}$ which has trivial intersection with the compact form of $\mathfrak{g}$, a contradiction. Thus $\mathfrak{h} = \mathfrak{g}$ and $H = G$, since $G$ is connected and simply connected.

 We conclude that all semipolynomials (and therefore polynomials) on connected semisimple complex Lie groups are constant.
 \end{example}

  \begin{example}\label{solv} Let $G$ be the group of all complex upper triangular matrices $g = (g_{ik})_{i,k\le n}$ with $|g_{ii}| = 1$ for all $i$. If $g_{ii}\neq g_{jj}$ for all $i\neq j$, then $g$ is similar to a diagonal matrix $h$ with $|h_{ii}| = 1$, and therefore $g$ is compact. Therefore compact elements are dense in $G$, and  $G = G_c$.
 \end{example}

\begin{theorem}\label{quotient} Let $X_0 = X^{G_c}$, the space of vectors fixed under all operators $\pi(g)$ where $ g\in G_c$. Then

(i) $X_0$ is invariant under $\pi(G)$ and  there is a  representation $\pi^c$ of $G/G_c$ on $X_0$ such that $\pi(g)x = \pi^c(q(g))x$ for $x\in X_0$, where $q: G\to G/G_c$ is the standard epimorphism.

(ii) An element $x\in X$ belongs to $(SP_n)_{\pi}$ or $(P_n)_{\pi}$ if and only if it belongs to $(SP_n)_{\pi^c}$ or, respectively, $(P_n)_{\pi^c}$.
\end{theorem}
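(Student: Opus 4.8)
The plan is to split the argument along the two parts of the statement, the decisive leverage coming from Lemma~\ref{precomp}.

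For part (i) I would first establish invariance of $X_0$ using only the normality of $G_c$. Take $x\in X_0$ and $g\in G$; for any $h\in G_c$ we have $g^{-1}hg\in G_c$, so $\pi(h)\pi(g)x=\pi(g)\pi(g^{-1}hg)x=\pi(g)x$, which shows $\pi(g)x\in X_0$. Since moreover $\pi(g)$ restricts to the identity on $X_0$ whenever $g\in G_c$, the restricted representation $g\mapsto\pi(g)|_{X_0}$ is trivial on $G_c$ and hence factors through the quotient: I set $\pi^c(q(g))=\pi(g)|_{X_0}$. This is well defined because $q(g_1)=q(g_2)$ forces $g_1g_2^{-1}\in G_c$ and therefore $\pi(g_1)|_{X_0}=\pi(g_2)|_{X_0}$; it is clearly a homomorphism, and each $\pi^c(q(g))$ is invertible on $X_0$ with inverse $\pi(g^{-1})|_{X_0}$. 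Weak continuity follows from the universal property of the quotient topology: for fixed $x\in X_0$ and $y\in X^*$ the function $g\mapsto\langle\pi(g)x,y\rangle$ is continuous on $G$ and constant on $G_c$-cosets, so it descends to a continuous function on $G/G_c$.

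For part (ii) the crucial observation is that both $(SP_n)_{\pi}$ and $(P_n)_{\pi}$ are already contained in $X_0$. Indeed $(P_n)_{\pi}\subset(SP_n)_{\pi}\subset(SP)_{\pi}$, and Lemma~\ref{precomp} gives $(SP)_{\pi}\subset X_0$; on the other side $(SP_n)_{\pi^c}$ and $(P_n)_{\pi^c}$ lie in $X_0$ by construction, since $\pi^c$ acts on $X_0$. It therefore suffices to compare the two conditions on a vector $x$ already in $X_0$. For such $x$ and any $h\in G$ we have $(\pi(h)-I)x=(\pi^c(q(h))-I)x$, and since $X_0$ is $\pi$-invariant this vector again lies in $X_0$; iterating, one sees that for all $h_1,\dots,h_{n+1}\in G$,
\[
(\pi(h_{n+1})-I)\cdots(\pi(h_1)-I)x=(\pi^c(q(h_{n+1}))-I)\cdots(\pi^c(q(h_1))-I)x,
\]
and in the same way $(\pi(h)-I)^{n+1}x=(\pi^c(q(h))-I)^{n+1}x$. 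Because $q$ is surjective, vanishing of the left-hand side for all choices in $G$ is equivalent to vanishing of the right-hand side for all elements of $G/G_c$, which yields both stated equivalences at once.

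The individual steps are routine; the point on which everything turns is the containment $(SP)_{\pi}\subset X_0$ supplied by Lemma~\ref{precomp}. Without it there would be no reason for a semipolynomial to be fixed by $G_c$, and the reduction to $X_0$ — which makes the $\pi$-difference operators coincide with the $\pi^c$-difference operators — would fail. The only mild bookkeeping is tracking that iterated differences of a vector in $X_0$ stay in $X_0$, and this is immediate from invariance.
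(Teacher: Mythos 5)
Your proposal is correct and follows essentially the same route as the paper: normality of $G_c$ gives invariance of $X_0$, the restricted representation factors through $q$, and on $X_0$ the $\pi$- and $\pi^c$-difference operators coincide, with surjectivity of $q$ closing the argument. You are in fact slightly more explicit than the paper in flagging that Lemma~\ref{precomp} is what places $(SP_n)_{\pi}$ and $(P_n)_{\pi}$ inside $X_0$ and so makes the equivalence in (ii) meaningful, which is a worthwhile clarification but not a different method.
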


\begin{proof} If $x\in X_0$ then, for all $g\in G$ and $h\in G_c$, one has $\pi(h)\pi(g)x = \pi(g)\pi(g^{-1}hg)x = \pi(g)x$, since $G_c$ is a normal subgroup. Therefore $\pi(g)x\in X_0$, $X_0$ is invariant. If $g_2^{-1}g_1 \in G_c$ then, for some $h\in G_c$, $\pi(g_1)x = \pi(g_2h)x = \pi(g_2)\pi(h)x = \pi(g_2)x$. It follows that setting $\pi^c(gG_c)x = \pi(g)x$ we correctly define a representation of $G/G_c$ on $X_0$ and the needed equality holds (because $q(g) = gG_c$).

Now
\begin{equation*}
\begin{split}
x\in (SP_n)_{\pi} \Leftrightarrow & (\pi(g) - 1)^{n+1}x = 0 \text{ for } g\in G   \Leftrightarrow \\
& (\pi^c(q(g)) - 1)^{n+1}x = 0 \text{ for } g\in G \Leftrightarrow x\in (SP_n)_{\pi^c}.
\end{split}
\end{equation*}
Similarly for polynomials.
\end{proof}

The  results of Van der Lijn, Djokovi\'{c}, Szekelyhidi and Laczkovich mentioned in the Introduction, extend to the representational setting as follows:
\begin{theorem}\label{comm}
If $G$ is commutative then $(SP_n)_{\pi}=(P_n)_{\pi}$ for every representation $\pi$ of $G$.
\end{theorem}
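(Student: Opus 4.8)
The plan is to reduce the assertion to the already-known scalar case on commutative groups (the results of Van der Lijn, Djokovi\'{c}, Szekelyhidi and Laczkovich quoted in the Introduction), passing between vectors and functions by means of matrix coefficients. Since $(P_n)_{\pi}\subset(SP_n)_{\pi}$ holds trivially (take all $h_i$ equal), only the inclusion $(SP_n)_{\pi}\subset(P_n)_{\pi}$ requires proof. Fix $x\in(SP_n)_{\pi}$ and an arbitrary continuous functional $y\in X^*$, and put $f(g)=\langle\pi(g)x,y\rangle$.

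The first step is a bookkeeping identity describing how the difference operators act on such matrix coefficients. Because $\pi(gh)=\pi(g)\pi(h)$ one has $\Delta_h f(g)=\langle\pi(g)(\pi(h)-I)x,y\rangle$, and since $G$ is commutative the operator $\pi(g)$ commutes with every $(\pi(h)-I)$. Iterating, I obtain for all $h_1,\dots,h_m\in G$
$$\Delta_{h_m}\cdots\Delta_{h_1}f(g)=\langle\pi(g)\,(\pi(h_m)-I)\cdots(\pi(h_1)-I)\,x,\ y\rangle.$$
Taking $h_1=\cdots=h_{n+1}=h$ and using $x\in(SP_n)_{\pi}$ gives $\Delta_h^{n+1}f\equiv 0$ for every $h$, so $f$ is a scalar semipolynomial of degree $\le n$ on the commutative group $G$.

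The second step invokes the classical equality $(SP_n)=(P_n)$ for complex-valued functions on a commutative group: $f$ is in fact a polynomial of degree $\le n$, that is $\Delta_{h_{n+1}}\cdots\Delta_{h_1}f\equiv 0$ for all $h_i$. Evaluating the displayed identity at the identity $e$ of $G$ yields $\langle(\pi(h_{n+1})-I)\cdots(\pi(h_1)-I)x,y\rangle=0$. As $y$ ranges over a separating family of functionals (Hahn--Banach in the locally convex case, the algebraic dual in the discrete case), the vector $(\pi(h_{n+1})-I)\cdots(\pi(h_1)-I)x$ vanishes, which is precisely $x\in(P_n)_{\pi}$.

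This argument has no deep obstacle: its whole content is the classical scalar theorem, and the only points needing care are that commutativity of $G$ is genuinely used to factor $\pi(g)$ out of the iterated differences (making the operator and functional conditions equivalent in both directions), and that the chosen functionals separate points of $X_{\pi}$. If instead one wants a self-contained proof avoiding the scalar result, the delicate step is a linearization: restrict $\pi$ to the closed invariant subspace $(SP_n)_{\pi}$, on which each $A_g:=\pi(g)-I$ is nilpotent of order $\le n+1$, and set $v_g=\log(I+A_g)$ (a finite sum, hence a genuine commuting operator). Then $\pi(h_1^{k_1}\cdots h_{n+1}^{k_{n+1}})=\exp(\sum_i k_i v_{h_i})$, and nilpotency of $\pi(h_1^{k_1}\cdots h_{n+1}^{k_{n+1}})-I$ forces $(\sum_i k_i v_{h_i})^{n+1}=0$ for all integers $k_i\ge 0$; polarizing this homogeneous operator-valued polynomial and reading off the coefficient of $k_1\cdots k_{n+1}$ gives $v_{h_1}\cdots v_{h_{n+1}}=0$, which, since each $v_{h_i}$ is $A_{h_i}$ times an invertible operator, is equivalent to $(\pi(h_1)-I)\cdots(\pi(h_{n+1})-I)=0$. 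I expect the reduction route above to be the cleaner one to write down, with the $\log$-linearization reserved as a remark.
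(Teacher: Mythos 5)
Your argument is correct, but it is not the route the paper takes. The paper's proof is a one-line algebraic observation: the cited scalar proofs (Van der Lijn, Djokovi\'{c}, Sz\'{e}kelyhidi, Laczkovich) actually establish an identity in the commutative polynomial ring, expressing $(t_1-1)\cdots(t_{n+1}-1)$ as a combination of terms each divisible by some $(t_{i_1}\cdots t_{i_k}-1)^{n+1}$, and this identity can be substituted directly into any commuting tuple of operators $\pi(g_i)$ --- each term then kills $x\in(SP_n)_{\pi}$ because $g_{i_1}\cdots g_{i_k}$ is again a group element. Your reduction goes the other way: you use the scalar theorem only as a black-box statement, transporting it to the representation via matrix coefficients $f(g)=\langle\pi(g)x,y\rangle$ and a separating family of functionals. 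That works (the identity $\Delta_{h_m}\cdots\Delta_{h_1}f(g)=\langle\pi(g)(\pi(h_m)-I)\cdots(\pi(h_1)-I)x,y\rangle$ is exactly the one the paper uses in Theorem~\ref{char-polyn}, and Hahn--Banach, respectively the algebraic dual, separates points), and it buys you independence from the internal structure of the classical proofs; the price is the extra hypothesis that $X^*$ separates points, which is harmless for Hausdorff locally convex spaces but is an assumption the paper's purely algebraic substitution never needs. One small correction: commutativity of $G$ is \emph{not} what lets you factor $\pi(g)$ out of the iterated differences --- that identity holds for arbitrary groups; commutativity enters only when you invoke the scalar theorem for $f$ on $G$. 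Your sketched self-contained alternative via $v_g=\log(I+A_g)$ and polarization of $(\sum_ik_iv_{h_i})^{n+1}=0$ is also sound (each $v_g$ is $A_g$ times a commuting unipotent, so nilpotency transfers both ways), and is in spirit closer to what the classical proofs do internally.
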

Indeed the proofs in \cite{VdL,Dj,Szek-McK,L} establish in various ways the possibility to write the polynomial $(t_1-1)(t_2-1)...(t_n-1)$ as a sum of polynomials that factorize throw the polynomials of the form $(t_{i_1}...t_{i_k}-1)^n$. The polynomials can be applied to a commutative $n$-tuple of operators $R_{g_i}$ as well as to  $\pi(g_i)$.

\begin{corollary}\label{C10} If $G/G_c$ is commutative then $(P_n)_{\pi}=(SP_n)_{\pi}$, for every representation $\pi$ of $G$.
\end{corollary}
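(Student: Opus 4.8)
The plan is to chain together the three preceding results: Lemma~\ref{precomp}, Theorem~\ref{quotient}, and Theorem~\ref{comm}. The key observation is that both $(SP_n)_{\pi}$ and $(P_n)_{\pi}$ live inside the space $X_0 = X^{G_c}$ of $G_c$-fixed vectors, so the whole question can be pushed down to the quotient group $G/G_c$, where commutativity is available.

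First I would verify the inclusions into $X_0$. By Lemma~\ref{precomp}, every $x\in (SP)_{\pi}$ satisfies $\pi(g)x = x$ for all $g\in G_c$, so $(SP)_{\pi}\subseteq X_0$; in particular $(SP_n)_{\pi}\subseteq X_0$. Since $(P_n)_{\pi}\subseteq (SP_n)_{\pi}$ (this inclusion holds for any representation, exactly as $(P_n)\subseteq (SP_n)$ for functions), we also get $(P_n)_{\pi}\subseteq X_0$. Thus it makes sense to compare these classes with the corresponding classes for the representation $\pi^c$ of $G/G_c$ on $X_0$ furnished by Theorem~\ref{quotient}(i).

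Next I would invoke Theorem~\ref{quotient}(ii), which gives, as subsets of $X_0$, the identifications
\begin{equation*}
(SP_n)_{\pi} = (SP_n)_{\pi^c}, \qquad (P_n)_{\pi} = (P_n)_{\pi^c}.
\end{equation*}
Now, by hypothesis $G/G_c$ is commutative, so Theorem~\ref{comm} applies to the representation $\pi^c$ and yields $(SP_n)_{\pi^c} = (P_n)_{\pi^c}$. Combining these equalities,
\begin{equation*}
(SP_n)_{\pi} = (SP_n)_{\pi^c} = (P_n)_{\pi^c} = (P_n)_{\pi},
\end{equation*}
which is exactly the claim.

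There is no genuine obstacle in this argument; it is a direct composition of the earlier statements. The only point requiring care is the containment step: one must confirm that \emph{both} classes, not merely the semipolynomial one, are supported on $X_0$, so that the transfer to $\pi^c$ via Theorem~\ref{quotient} is legitimate for each of them simultaneously. This is immediate once the inclusion $(P_n)_{\pi}\subseteq (SP_n)_{\pi}$ is recorded.
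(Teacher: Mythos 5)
Your proof is correct and is precisely the argument the paper intends: the corollary is obtained by chaining Lemma~\ref{precomp} (to place both classes inside $X_0=X^{G_c}$), Theorem~\ref{quotient}(ii) (to identify them with the corresponding classes for $\pi^c$ on $G/G_c$), and Theorem~\ref{comm} (commutativity of $G/G_c$). Your explicit remark that $(P_n)_{\pi}\subseteq (SP_n)_{\pi}$ is needed to legitimize the transfer for the polynomial class is a worthwhile point of care that the paper leaves implicit.
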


 In particular, the equality  $(P_n)_{\pi}=(SP_n)_{\pi}$ holds if $G$ is generated by a set $M\cup N$ where $M$ is commutative, $N$ consists of compact elements. For example it holds if $G$ is a free product $G_1\ast G_2$ where $G_1$ is compact, $G_2$ is commutative.

  We will finish this section by noting that arguments similar to used in the proof of Lemma \ref{precomp} show that for bounded representations on Banach spaces (in particular, for unitary representations) semipolynomials are precisely fixed points.

 \begin{corollary}\label{bound} If $\pi$ is a bounded representation of a group $G$ on a Banach space $X$ then $(SP)_{\pi} = \{x\in X: \pi(g)x = x \text{  for all  }g\in G\}$.
 \end{corollary}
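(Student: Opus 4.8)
The plan is to adapt, essentially verbatim, the argument used for compact elements in the proof of Lemma~\ref{precomp}; the decisive change is that boundedness of $\pi$ supplies the required boundedness of orbits for \emph{every} $g\in G$, not merely for those in $G_c$.

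I would first dispose of the easy inclusion: every fixed point lies in $(SP_0)_\pi\subset(SP)_\pi$. For the converse I would set $C=\sup_{g\in G}\|\pi(g)\|<\infty$, fix $x\in(SP)_\pi$ and $g\in G$, and write $\delta(g)=\pi(g)-1$. Using that $\pi$ is a homomorphism, so $\pi(g)^k=\pi(g^k)$, one gets $\|\pi(g)^kx\|\le C\|x\|$ for all $k$; hence for each $y\in X^*$ the scalar sequence $a_k=\langle\pi(g)^kx,y\rangle$ is bounded by $C\|x\|\,\|y\|$.

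Next I would exploit the semipolynomial condition. Choosing $n$ with $\delta(g)^{n+1}x=0$, I would expand
$$
a_k=\langle(1+\delta(g))^kx,y\rangle=\sum_{j=0}^{n}\binom{k}{j}\langle\delta(g)^jx,y\rangle,
$$
a polynomial in $k$. Since a polynomial bounded along $\mathbb{N}$ is constant, all its coefficients of positive degree vanish, and in particular $\langle\delta(g)x,y\rangle=0$. As $y$ is arbitrary this forces $\delta(g)x=0$, i.e.\ $\pi(g)x=x$; and since $g$ was arbitrary, $x$ lies in the space of fixed points. This gives $(SP)_\pi\subset\Phi_\pi$ and completes the proof.

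The only step that deserves care---though it is hardly an obstacle---is securing the uniform bound on the orbit $\{\pi(g)^kx\}_{k\ge0}$: this is precisely the place where the global bound $C$ takes over the role that compactness played in Lemma~\ref{precomp}, and it is what licenses the conclusion that $a_k$ is constant. The remaining finite-difference bookkeeping is identical to that lemma.
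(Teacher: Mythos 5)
Your proof is correct and is exactly what the paper intends: the authors give no separate argument for Corollary~\ref{bound}, stating only that it follows by ``arguments similar to those used in the proof of Lemma~\ref{precomp}'', and your write-up is precisely that adaptation, with the uniform bound $C=\sup_g\|\pi(g)\|$ replacing compactness of the element as the source of boundedness of $a_k=\langle\pi(g)^kx,y\rangle$. The finite-difference step (a bounded polynomial in $k$ has vanishing positive-degree coefficients, hence $\langle\delta(g)x,y\rangle=0$ for all $y\in X^*$) matches the lemma's argument verbatim.
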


\bigskip

\section{ Polynomial functions}

Now we return to our initial object -- the representation $R: g \mapsto  R_g$ of a topological group $G$ by right shifts on the space $C(G)$ endowed with the topology of uniform convergence on compacts. Recall that right shifts act by the formula  $R_gf(h) = f(hg)$.

 Clearly $R$-polynomials and $R$-semipolynomials coincide with  {\it polynomials} and {\it semipolynomials} defined in the Introduction  by the equalities (\ref{MD}) and (\ref{UD}). So we write $(SP)$, $(P)$ instead of $(SP)_R$ and $(P)_R$.  We also write $(SP)(G)$ and so on if it is not evident which group we deal with. It is not difficult to show that $(SP)$ and $(P)$ are closed subalgebras of $C(G)$, for any $G$.

It follows also from the definition that the subspace $(F)_R$ of $C(G)$ coincides with the space $(QP)$ of  quasipolynomials.

Recall that a matrix element of a representation $\pi$ is a function of the form $g\mapsto \langle\pi(g)x,y\rangle$ where $x\in X$, $y\in X^*$.
It is known \cite{St} that (continuous) quasipolynomials are exactly the matrix elements of (continuous) finite-dimensional representations.
Among them polynomials  can be  characterized as  matrix elements of a special class of finite-dimensional representations. To introduce it in general, let us say  that a (non-necessarily finite-dimensional) representation $\pi$ of $G$ is an 1-representation if the operator $\pi(g)-1$ is nilpotent for each $g\in G$. Furthermore, for $n\in \mathbb{N}$, we say that $\pi$ is $(1_n)$-representation if $(\pi(h)-1)^{n+1} = 0$ for all $h\in G$. If a more strong condition
\begin{equation}\label{tr-par}
(\pi(h_1)-1)(\pi(h_2)-1)...(\pi(h_{n+1})-1) = 0 \text{ for all } h_1,...,h_{n+1}\in G
\end{equation}
holds then we say that $\pi$ is a $(1_n+)$-representation.

\begin{theorem}\label{char-polyn}
A function $f\in C(G)$ is a polynomial (semipolynomial) of degree at most $n$ if and only if it is a matrix element of a $(1_n+)$-representation (respectively $(1_n)$-representation).
\end{theorem}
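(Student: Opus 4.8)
The plan is to prove both implications by tracking how difference operators act on matrix elements. The whole argument rests on one elementary identity: if $\pi$ is a representation of $G$ on $X$, $x\in X$, $y\in X^{*}$, and $f(g)=\langle\pi(g)x,y\rangle$, then for every $h\in G$
\[
\Delta_h f(g)=\langle\pi(gh)x,y\rangle-\langle\pi(g)x,y\rangle=\langle\pi(g)(\pi(h)-1)x,y\rangle,
\]
so that applying $\Delta_h$ simply replaces the vector $x$ by $(\pi(h)-1)x$. Iterating, $\Delta_{h_{n+1}}\cdots\Delta_{h_1}f$ is the matrix element of $(\pi(h_{n+1})-1)\cdots(\pi(h_1)-1)x$ against $y$, and $\Delta_h^{\,n+1}f$ is the matrix element of $(\pi(h)-1)^{n+1}x$ against $y$.

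For the ``if'' direction, suppose $f$ is a matrix element of a $(1_n+)$-representation. Then the operator $(\pi(h_{n+1})-1)\cdots(\pi(h_1)-1)$ appearing above vanishes: the defining relation \eqref{tr-par} holds for every ordered tuple of group elements, hence also for the reversed tuple, which gives exactly this product. Therefore $\Delta_{h_{n+1}}\cdots\Delta_{h_1}f=0$ for all $h_i$, i.e.\ $f\in(P_n)$. The semipolynomial case is identical: for a $(1_n)$-representation the relation $(\pi(h)-1)^{n+1}=0$ forces $\Delta_h^{\,n+1}f=0$, so $f\in(SP_n)$.

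For the converse I would let the class itself supply the representation. Put $X=(P_n)$ (respectively $X=(SP_n)$), which is a closed $R$-invariant subspace of $C(G)$ by Lemma \ref{inv}, and let $\pi=R|_{X}$. Every element of $X$ satisfies the defining relation of its class for all steps, so the operator $(\pi(h_{n+1})-1)\cdots(\pi(h_1)-1)$ vanishes on $X$ for every tuple $h_1,\dots,h_{n+1}$; since the tuple is arbitrary this is precisely the $(1_n+)$ condition (respectively $(\pi(h)-1)^{n+1}=0$ gives the $(1_n)$ condition). Now take the cyclic vector $x=f\in X$ and let $y=\mathrm{ev}_e$ be evaluation at the identity. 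Then $\langle\pi(g)f,\mathrm{ev}_e\rangle=(R_gf)(e)=f(g)$, exhibiting $f$ as a matrix element of $\pi$.

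The points that need care are all in the converse. First, one must check that $\pi=R|_X$ is a genuine weakly continuous representation on the locally convex space $X$; this is inherited from the fact that $R$ is a representation on $C(G)$ with the compact-open topology together with the fact that $X$ is closed and invariant. Second, $\mathrm{ev}_e$ must lie in the continuous dual $X^{*}$, which holds because point evaluation is continuous for the topology of uniform convergence on compacts (the singleton $\{e\}$ is compact). The only genuinely non-formal moment is the bookkeeping of the order of the factors $(\pi(h_i)-1)$: the composed difference operators produce the product in reversed order, and it is the universal quantifier over all tuples in \eqref{tr-par} that lets us identify it with the hypothesis. I expect no deeper obstacle, so the proof should be short once the matrix-element identity is recorded.
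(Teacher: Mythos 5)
Your proposal is correct and follows essentially the same route as the paper: the forward direction via the identity $\Delta_h\langle\pi(\cdot)x,y\rangle=\langle\pi(\cdot)(\pi(h)-1)x,y\rangle$, and the converse by restricting the right regular representation to a closed invariant subspace containing $f$ and pairing with evaluation at the identity. The only (immaterial) difference is that you take the whole class $(P_n)$ as the representation space where the paper takes the closed linear span of the orbit of $f$; both yield a $(1_n+)$-representation by the same argument.
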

\begin{proof}
Assume that $f \in (P_n)$ (the proof in the case $f \in (SP_n)$ is similar). Let $X$ be the closed linear hull in $C(X)$ of all shifts $R_{h}f$, $h\in G$. Let $\pi = R_X$, the restriction of $R$ to $X$. Since $f \in (P_n)_R$ it follows easily that $f\in (P_n)_{\pi}$. Since $(P_n)_{\pi}$ is closed, $\pi$-invariant and $f$ is a cyclic vector of $\pi$, we conclude that $(P_n)_{\pi} = X$, the equality \ref{tr-par} holds, and $\pi$ is a $(1_n+)$-representation.

 Let now $\delta_e$  be the functional of evaluation in the unit $e$ of $G$: $\langle u,\delta_e \rangle = u(e)$, for $u\in C(X)$. Denoting by $\varepsilon$ the restriction of $\delta_e$ to $X$ we have $\langle \pi(g)f,\varepsilon \rangle = \langle R_gf,\delta \rangle = f(g)$.

Conversely, let $\pi$ be a $(1_n+)$-representation of $G$ on a space $X$, and $f(g) = \langle \pi(g)x,\zeta \rangle$ where $x\in X, \zeta \in X^*$. Then $\Delta_hf(g) = \langle \pi(g)(\pi(h) - 1)x, \zeta \rangle$ and therefore
$$\Delta_{h_1}\Delta_{h_2}...\Delta_{h_{n+1}}f(g) = \langle \pi(g)(\pi(h_{n+1}) - I)\ldots (\pi(h_2) - I)(\pi(h_1) - I)x, \zeta \rangle,$$
for all $n$ and all $h_1,...,h_{n+1} \in G$. Since $\pi$ is a $(1_n+)$-representation,
the equality (\ref{different}) holds for all $x\in X$ and $h_1,...,h_{n+1}\in G$. It follows that $\Delta_{h_1}\Delta_{h_2}...\Delta_{h_{n+1}}f(g) = 0$, $f\in (P_n)$.
\end{proof}

Let us call a subset $W$ of $C(G)$ {\it symmetric} if for each $f\in W$, the function $f^*(g):= f(g^{-1})$ belongs to $W$. Note that if $f$ is a matrix element of a representation $\pi$ then $f^*$ is a matrix element of the dual representation $\rho$: $\rho (g) = (\pi (g^{-1}))^*$. Since the representation dual to an $(1_n)$- or $(1_n+)$-representation belongs to the same class, we obtain, applying Theorem \ref{char-polyn}, the following result:
\begin{corollary}\label{symmetry}
For any $n \in \mathbb{N}$, the sets $(SP_n)$ and $(P_n)$ are symmetric subspaces of $C(G)$, and therefore $(SP)$ and $(P)$ are symmetric subalgebras.
\end{corollary}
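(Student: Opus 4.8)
The plan is to reduce the statement to the matrix-element characterization of Theorem~\ref{char-polyn} together with the behavior of the dual representation, as already indicated in the remark preceding the corollary. First I would fix $f\in (P_n)$ and, using Theorem~\ref{char-polyn}, write $f(g) = \langle \pi(g)x, y\rangle$ for some $(1_n+)$-representation $\pi$ on a space $X$ with $x\in X$, $y\in X^*$. I would then introduce the dual representation $\rho(g) = (\pi(g^{-1}))^*$ on $X^*$ and check that it is indeed a weakly continuous representation: the anti-homomorphism property of $g\mapsto \pi(g^{-1})$ is exactly undone by the order reversal built into the adjoint, so $\rho(g_1g_2) = \rho(g_1)\rho(g_2)$, while weak continuity follows from that of $\pi$ and of inversion. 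A short computation then gives $f^*(g) = f(g^{-1}) = \langle \pi(g^{-1})x, y\rangle = \langle \rho(g)y, x\rangle$, where $x$ is read as a functional on $X^*$ via the canonical embedding; hence $f^*$ is a matrix element of $\rho$.

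The crux is to verify that $\rho$ stays in the same class, namely $(1_n+)$ in the polynomial case and $(1_n)$ in the semipolynomial case. Here I would use the two identities $\rho(h) - 1 = (\pi(h^{-1}) - 1)^*$ and $A^*B^* = (BA)^*$. For polynomials they yield
\[
(\rho(h_1)-1)\cdots(\rho(h_{n+1})-1) = \big((\pi(h_{n+1}^{-1})-1)\cdots(\pi(h_1^{-1})-1)\big)^*,
\]
and the inner product vanishes because it is precisely an instance of the $(1_n+)$-condition for $\pi$ applied to the reversed tuple $h_{n+1}^{-1},\dots,h_1^{-1}$. For semipolynomials one gets directly $(\rho(h)-1)^{n+1} = ((\pi(h^{-1})-1)^{n+1})^* = 0$. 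Thus $\rho$ is a $(1_n+)$- (resp. $(1_n)$-) representation, and applying the converse direction of Theorem~\ref{char-polyn} to the matrix element $f^*$ of $\rho$ gives $f^*\in (P_n)$ (resp. $f^*\in (SP_n)$). This establishes that $(P_n)$ and $(SP_n)$ are symmetric subspaces.

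Finally, symmetry passes to the unions: if $f\in (P)=\bigcup_{n}(P_n)$ then $f\in (P_n)$ for some $n$, so $f^*\in (P_n)\subset (P)$, and likewise for $(SP)$. Since $(SP)$ and $(P)$ have already been observed to be subalgebras of $C(G)$, they are symmetric subalgebras, which is the remaining assertion.

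The only point demanding care is the bookkeeping in the $(1_n+)$ case: the order reversal produced by the adjoint must be matched against both the reversal of the argument tuple and the passage to inverses, so that the hypothesis on $\pi$ applies verbatim to the transformed product. Once this matching is set up correctly the rest of the argument is formal, as every other ingredient (continuity of $f^*$, the representation property of $\rho$, and the identification of $f^*$ as a matrix element) is routine.
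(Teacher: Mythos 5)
Your proposal is correct and follows essentially the same route as the paper: the authors prove the corollary precisely by noting that $f^*$ is a matrix element of the dual representation $\rho(g)=(\pi(g^{-1}))^*$, that the dual of a $(1_n)$- or $(1_n+)$-representation stays in the same class, and then invoking Theorem~\ref{char-polyn}. You have merely written out the order-reversal bookkeeping that the paper leaves implicit, and your verification of it is accurate.
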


\begin{remark}\label{left} One could define polynomials and semipolynomials on a group via left regular representation $h \mapsto L_h$ where $L_hf(g) = f(h^{-1}g)$. But it would not change the classes. Indeed, the relation $R_{h}(f^*) = (L_{h^{-1}} f)^*$ implies that if $f\in (P_n)_L$ then $f^* \in (P_n)_R = (P_n)$. So by Corollary \ref{symmetry}, $f\in (P_n)$.
\end{remark}

\begin{theorem}\label{1-repr} For a function $f\in C(G)$, the following conditions are equivalent:

(i) $f \in (P)\cap (QP)$;

(ii) $f$ is a matrix element of a finite-dimensional 1-representation.
\end{theorem}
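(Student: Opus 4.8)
The plan is to deduce both implications from the matrix-element description of Theorem \ref{char-polyn}, combined with the finite-dimensional coincidence $(SP)_{\pi} = (P)_{\pi}$ of Theorem \ref{finite}, using the identification of $(QP)$ with $(F)_R$, the union of all finite-dimensional $R$-invariant subspaces of $C(G)$. The point is that the two halves of condition (i) are exactly matched against the two weakenings in (ii): living on a finite-dimensional space, and being a $1$-representation.

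For $(i)\Rightarrow(ii)$, I would reuse the cyclic construction from the proof of Theorem \ref{char-polyn}. Let $X$ be the closed linear hull in $C(G)$ of the right shifts $R_h f$, $h\in G$, and set $\pi = R|_X$. Because $f\in (QP) = (F)_R$, the invariant subspace generated by $f$ is finite-dimensional, so $\dim X < \infty$. Because $f\in (P)$, say $f\in (P_n)_R$, and $(P_n)_R$ is closed and $R$-invariant by Lemma \ref{inv}, the whole space $X$ lies in $(P_n)_R$; since the defining relation \eqref{different} for vectors of $X$ only involves the operators $\pi(h)=R_h|_X$, this gives $X = (P_n)_{\pi}$, so $\pi$ is a $(1_n+)$-representation, in particular a $1$-representation. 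Taking $\varepsilon=\delta_e|_X$ as in Theorem \ref{char-polyn} yields $f(g)=\langle\pi(g)f,\varepsilon\rangle$, exhibiting $f$ as a matrix element of the finite-dimensional $1$-representation $\pi$.

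For $(ii)\Rightarrow(i)$, suppose $f(g)=\langle\pi(g)x,y\rangle$ for a finite-dimensional $1$-representation $\pi$ on a space $X$ with $\dim X = N$. Then $f$ is a quasipolynomial, being a matrix element of a finite-dimensional representation, so $f\in (QP)$. To place $f$ in $(P)$ I would first upgrade pointwise nilpotency to uniform nilpotency: each nilpotent operator $\pi(g)-1$ on the $N$-dimensional space $X$ satisfies $(\pi(g)-1)^N = 0$, so $X = (SP_{N-1})_{\pi} = (SP)_{\pi}$. Theorem \ref{finite} then gives $X = (SP)_{\pi} = (P)_{\pi}$, so $\pi$ is a $(1_m+)$-representation for some $m$, and the converse half of Theorem \ref{char-polyn} yields $f\in (P_m)\subset (P)$. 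Hence $f\in (P)\cap (QP)$.

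The routine points are the cyclic construction and the passage from pointwise to uniform nilpotency on a fixed finite-dimensional space. The conceptual crux—and the one place where I expect the real work to lie—is that neither half of condition (i) suffices on its own: finite-dimensionality of the ambient representation only guarantees that matrix elements are quasipolynomials and that semipolynomials coincide with polynomials, whereas the polynomial condition is precisely what forces the cyclically generated representation to be a genuine $1$-representation. Thus Theorem \ref{finite} is the bridge that makes the upgrade from semipolynomial to polynomial possible in the $(ii)\Rightarrow(i)$ direction, while Lemma \ref{inv} supplies the closed invariance needed in the $(i)\Rightarrow(ii)$ direction.
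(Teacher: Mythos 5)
Your proof is correct and follows essentially the same route as the paper: the cyclic construction $X=\overline{\span}\{R_hf\}$ with $\pi=R|_X$ for $(i)\Rightarrow(ii)$, and for $(ii)\Rightarrow(i)$ the observation that pointwise nilpotency on a finite-dimensional space gives $X=(SP)_\pi$, after which Theorem \ref{finite} upgrades this to $(P)_\pi$ and the matrix-element computation of Theorem \ref{char-polyn} finishes. Your version is marginally more explicit than the paper's (which only says ``it follows from the proof of Theorem \ref{finite}''), but there is no substantive difference.
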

\begin{proof}
$(i) \Rightarrow (ii)$. Let $X$ be the linear hull in $C(X)$ of all shifts $R_{h}f$, $h\in G$. By our assumptions $\dim X < \infty$. Let $\pi = R_X$, the restriction of $R$ to $X$. It follows easily from the definition that $f\in (P)_{\pi}$. Since $(P)_{\pi}$ is $\pi$-invariant and $f$ is a cyclic vector of $\pi$, $(P)_{\pi} = X$, all $\pi(g)-1$ are nilpotent, $\pi$ is a 1-representation.

 Let now $\delta_e$  be the functional of evaluation in the unit $e$ of $G$: $\langle u,\delta \rangle = u(e)$, for $u\in C(X)$. Denoting by $\varepsilon$ the restriction of $\delta_e$ to $X$ we have $\langle \pi(g)f,\varepsilon \rangle = \langle R_gf,\delta \rangle = f(g)$.

$(ii) \Rightarrow (i)$. Let $\pi$ be a 1-representation of $G$ on a finite-dimensional space $X$, and $f(g) = \langle \pi(g)x,\zeta \rangle$ where $x\in X, \zeta \in X^*$. Then $\Delta_hf(g) = \langle \pi(g)(\pi(h) - 1)x, \zeta \rangle$ and therefore
$$\Delta_{h_1}\Delta_{h_2}...\Delta_{h_{n+1}}f(g) = \langle \pi(g)(\pi(h_{n+1}) - I)\ldots (\pi(h_2) - I)(\pi(h_1) - I)x, \zeta \rangle,$$
for all $n$ and all $h_1,...,h_{n+1} \in G$. It follows from the proof of Theorem \ref{finite} that if $\pi$ is a 1-representation, then there is $n$ such that
the equality (\ref{different}) holds for all $x\in X$ and $h_1,...,h_{n+1}\in G$. Therefore $\Delta_{h_1}\Delta_{h_2}...\Delta_{h_{n+1}}f(g) = 0$, $f\in (P)\cap (QP)$.
\end{proof}

Applying Corollary \ref{fin} to $R$ we get
\begin{theorem}\label{ma} $$(SP)\cap (QP)\subseteq (P)$$
\end{theorem}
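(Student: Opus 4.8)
The goal is to show that any function $f$ that is simultaneously a semipolynomial and a quasipolynomial is in fact a polynomial. The plan is to recognize that this is exactly the regular-representation instance of part (i) of Corollary~\ref{fin}, so the main work is to identify the relevant functional classes with their representational counterparts and then invoke that corollary.

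First I would recall the two identifications already established in the excerpt. On one hand, by definition $(SP)$ and $(P)$ coincide with $(SP)_R$ and $(P)_R$ for the right regular representation $R$ on $C(G)$. On the other hand, the text states immediately after introducing the class $(QP)$ that the subspace $(F)_R$ of $C(G)$ coincides with the space $(QP)$ of quasipolynomials; this is the key translation, since a function whose shifts span a finite-dimensional subspace is precisely a vector $x$ for which $R(G)x$ lies in a finite-dimensional subspace, which is the defining property of $(F)_R$.

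With these identifications in hand, the proof is essentially a one-line application. Part (i) of Corollary~\ref{fin} asserts that $(SP)_{\pi}\cap (F)_{\pi}\subset (P)_{\pi}$ for an arbitrary representation $\pi$. Taking $\pi = R$, this reads
\begin{equation*}
(SP)_R\cap (F)_R\subset (P)_R,
\end{equation*}
which, after substituting $(SP)_R = (SP)$, $(F)_R = (QP)$, and $(P)_R = (P)$, is exactly the desired inclusion $(SP)\cap (QP)\subseteq (P)$. I should emphasize that Corollary~\ref{fin}(i) holds for \emph{every} group, with no finite-generation hypothesis, because its proof only uses Theorem~\ref{finite} applied to the restriction of $\pi$ to a finite-dimensional invariant subspace containing the given vector; thus no restriction on $G$ is needed here either.

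The main point to get right is not an obstacle in the usual sense but rather the bookkeeping of the identifications: I must be careful to confirm that $(F)_R = (QP)$ is genuinely the definition of quasipolynomials being used, and that the regular representation $R$ on $C(G)$ is a legitimate representation to which the general results of Section~2 apply. Since $R$ is a weakly continuous representation on the locally convex space $C(G)$ (with the topology of uniform convergence on compacts), it falls squarely within the framework of Section~2, and Corollary~\ref{fin}(i) applies verbatim. Hence the statement follows with no further computation.
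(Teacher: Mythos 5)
Your proposal is correct and is exactly the paper's argument: the authors prove Theorem~\ref{ma} by applying Corollary~\ref{fin}(i) to the right regular representation $R$, using the identifications $(SP)_R=(SP)$, $(P)_R=(P)$, and $(F)_R=(QP)$. Your additional remark that part (i) requires no finite-generation hypothesis is a correct and worthwhile clarification, but the route is the same.
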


In general not every polynomial is a quasipolynomial. For example, if $H$ is an infinite-dimensional Hilbert space, then $f(x)=\langle x,x\rangle$ is  a polynomial of degree $\leq 2$, but the span of its right shifts is infinite-dimensional, so  $f$ is not a quasipolynomial.  A complete characterization of Abelian groups on which $(P) = (QP)$ is given in \cite{laszlo_spectral} (see also \cite{Laszlo-discrete}). We have the following consequence of Corollary \ref{fin}:

\begin{corollary} \label{NCQP}
If $G$ is topologically finitely generated then $(P) = (QP)\cap (SP)$.
\end{corollary}

Note that the class of groups on which every polynomial is a quasipolynomial is strictly larger than the class of topologically finitely generated groups.

A locally compact group is called maximally almost periodic (MAP) if its finite-dimensional unitary continuous representations separate points. Since each connected MAP group is topologically isomorphic to $\mathbb{R}^n \times K$ for some compact connected group $K$ (see, for example, \cite[Theorem 3 in Section 9]{Morris}) then Theorem \ref{quotient} implies
\begin{corollary} If $G$ is a connected MAP group then $(P) = (QP)\cap  (SP)$.
\end{corollary}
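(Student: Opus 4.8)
The plan is to prove the two inclusions separately. The inclusion $(QP)\cap(SP)\subseteq(P)$ is immediate from Theorem \ref{ma}, which holds for every topological group. For the reverse inclusion, observe that $(P)\subseteq(SP)$ always, so the entire content is to show that every polynomial on a connected MAP group is a quasipolynomial, i.e. $(P)\subseteq(QP)$. I would exploit the structure theorem quoted above, writing $G$ (up to topological isomorphism) as $\R^n\times K$ with $K$ compact connected, and reduce the problem to $\R^n$ by means of Theorem \ref{quotient}.

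First I would identify the subgroup $G_c$. Every element of $\{0\}\times K$ lies in the compact subgroup $K$, whereas an element $(v,k)$ with $v\neq 0$ generates a cyclic group whose $\R^n$-component $\Z v$ is discrete and unbounded, hence non-precompact; thus the compact elements are exactly $\{0\}\times K$, so $G_c=K$ and $G/G_c\cong\R^n$. Applying Theorem \ref{quotient} to the regular representation $R$ of $G$ on $C(G)$, the fixed space $X_0=C(G)^{G_c}$ consists of the right-$K$-invariant functions, which I identify with $C(\R^n)=C(G/K)$; the induced representation $R^c$ of $G/G_c\cong\R^n$ on $X_0$ is then the (right) regular representation of $\R^n$, since for a right-$K$-invariant $f$ one has $R_{(v,k)}f=R_{(v,0)}f$, corresponding under the identification to the translate of the descended function $\tilde f$ by $v$. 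In particular $(P)_{R^c}=(P)(\R^n)$ and $(F)_{R^c}=(QP)(\R^n)$.

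Now take $f\in(P)(G)$. By $(P)\subseteq(SP)$ and Lemma \ref{precomp}, $f\in X_0$, and Theorem \ref{quotient}(ii) gives $\tilde f\in(P)_{R^c}=(P)(\R^n)$. Since $\R^n$ is topologically finitely generated, Corollary \ref{NCQP} yields $(P)(\R^n)\subseteq(QP)(\R^n)$, so the right translates of $\tilde f$ span a finite-dimensional subspace of $C(\R^n)$. It then remains to transfer this finiteness back to $G$: the identity $R_{(v,k)}f=R_{(v,0)}f$ shows that the linear span of all right translates of $f$ equals the span of $\{R_{(v,0)}f:v\in\R^n\}$, which is isomorphic to the span of the translates of $\tilde f$ and hence finite-dimensional. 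Therefore $f\in(QP)(G)$, establishing $(P)\subseteq(QP)\cap(SP)$ and, together with Theorem \ref{ma}, the desired equality.

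The step I expect to require the most care is this last one, the transfer of the quasipolynomial (finite-dimensionality) property across the quotient $q:G\to G/G_c$: Theorem \ref{quotient} is stated only for the classes $(P_n)$ and $(SP_n)$ and says nothing a priori about $(QP)=(F)_R$. I would therefore record explicitly the easy orbit-span identity $\pi(G)x=\pi^c(G/G_c)x$ for $x\in X_0$, so that $x$ lies in a finite-dimensional $\pi$-invariant subspace precisely when it lies in a finite-dimensional $\pi^c$-invariant one, together with the verification that $R^c$ genuinely coincides with the regular representation of $\R^n$ (matching the compact-open topologies under $C(G)^{G_c}\cong C(\R^n)$, which is clean because $K$ is compact). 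Once these identifications are in place the finiteness passes in both directions and the argument closes.
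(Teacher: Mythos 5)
Your proof is correct and follows essentially the route the paper intends: the paper derives this corollary in one line from the structure theorem $G\cong\mathbb{R}^n\times K$ together with Theorem \ref{quotient}, which is exactly your reduction modulo $G_c=\{0\}\times K$ to the regular representation of $\mathbb{R}^n$. Your explicit verification that the quasipolynomial (finite orbit-span) property transfers across the quotient --- a point the paper leaves implicit, since Theorem \ref{quotient} is stated only for $(P_n)$ and $(SP_n)$ --- is a welcome completion of the argument rather than a deviation from it.
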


Many examples can be constructed using Lemma \ref{precomp}: if $G$ is topologically generated by $G_c$ then each (semi)polynomial on $G$ is a constant. Another set of examples are the direct limits considered in Theorem \ref{dirlim}. In particular,
\begin{corollary}\label{ration}
Let $G = \mathbb{Q}$, the additive group of rational numbers. Each polynomial on $G$ is a quasipolynomial.
\end{corollary}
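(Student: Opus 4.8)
The plan is to deduce the statement directly from Theorem~\ref{dirlim} applied to the regular representation $R$ of $G=\mathbb{Q}$. The only thing to check is that $\mathbb{Q}$ presents itself as a direct limit of the form required there, which is exactly the content of the paragraph following Theorem~\ref{dirlim}: taking $\Lambda=\mathbb{N}$ ordered by divisibility and $G_m=\{t\in\mathbb{Q}:mt\in\mathbb{Z}\}=\tfrac1m\mathbb{Z}$, the net $m\mapsto G_m$ is up-directed (given $m,k$, their least common multiple $l$ satisfies $G_m,G_k\subset G_l$), its union is all of $\mathbb{Q}$ (each $p/q$ lies in $G_q$), and each $G_m\cong\mathbb{Z}$ is generated by a single element. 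Hence the hypotheses of Theorem~\ref{dirlim} hold with $\nu=\sup_m k_m=1<\infty$, and since $\cup_m G_m=\mathbb{Q}$ already, no closure is needed.

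With these hypotheses verified, Theorem~\ref{dirlim} yields $(P)_{\pi}\subset(F)_{\pi}$ for every representation $\pi$ of $\mathbb{Q}$. I would then specialize to $\pi=R$, the representation of $\mathbb{Q}$ by right shifts on $C(\mathbb{Q})$. Recalling the two identifications made at the start of Section~3 --- namely $(P)_R=(P)$ and $(F)_R=(QP)$ --- the inclusion $(P)_R\subset(F)_R$ reads precisely $(P)\subset(QP)$, which is the assertion that every polynomial on $\mathbb{Q}$ is a quasipolynomial.

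There is essentially no obstacle beyond bookkeeping: all the real work is carried out in Theorem~\ref{dirlim}, whose inductive argument bounds the dimension of the invariant subspace containing a polynomial of degree $n$ by a quantity $\phi(n,\nu)$ depending only on $n$ and $\nu$. The one point worth stressing is that the conclusion is uniform in the degree --- a polynomial of degree $n$ on $\mathbb{Q}$ sits in an invariant subspace of dimension at most $\phi(n,1)$ --- so the argument genuinely uses the finiteness of $\nu$, and not merely the fact that $\mathbb{Q}$ is a countable increasing union of cyclic groups.
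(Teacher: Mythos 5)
Your proposal is correct and follows exactly the route the paper intends: Corollary~\ref{ration} is stated as an immediate consequence of Theorem~\ref{dirlim} applied to the regular representation, using the decomposition $\mathbb{Q}=\bigcup_m G_m$ with $G_m=\{t\in\mathbb{Q}: mt\in\mathbb{Z}\}\cong\mathbb{Z}$ described in the paragraph following that theorem. Your verification of the hypotheses and the identifications $(P)_R=(P)$, $(F)_R=(QP)$ match the paper's argument precisely.
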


It should be said that quasipolynomials on $\mathbb{Q}$ have very complicated structure -- even the group of characters (they are quasipolynomials of order 1) of $\mathbb{Q}$ cannot be described in ``elementar'' way. The next statement shows that for polynomials the situation is much better.

\begin{proposition}\label{rat}
 Each polynomial (= semipolynomial) on $\mathbb{Q}$ is the restriction to $\mathbb{Q}$ of an ordinary polynomial (an element of $\mathbb{C}[X]$).
\end{proposition}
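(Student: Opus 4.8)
The plan is to exploit the divisible structure of $\mathbb{Q}$: although $\mathbb{Q}$ is not finitely generated, it is the increasing union of the cyclic subgroups $H_m = \frac{1}{m}\mathbb{Z}$, and on each of these the classical one-variable finite-difference theory applies. First I would record the reduction: since $\mathbb{Q}$ is commutative, Theorem \ref{comm} gives $(SP_n)(\mathbb{Q}) = (P_n)(\mathbb{Q})$, which justifies the parenthetical identification and lets me work with a genuine polynomial $f$ of some degree $\le n$.

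The core observation is that restriction to a subgroup preserves polynomiality and that polynomiality is transported by group isomorphisms. For fixed $m \in \mathbb{N}$ the map $\phi_m : \mathbb{Z} \to H_m$, $\phi_m(k) = k/m$, is a group isomorphism, and since $\Delta_h(g\circ\phi_m) = (\Delta_{\phi_m(h)}\, g)\circ\phi_m$, the function $k \mapsto f(k/m)$ lies in $(P_n)(\mathbb{Z})$. A function on $\mathbb{Z}$ annihilated by $\Delta_1^{n+1}$ is the restriction of an ordinary polynomial of degree $\le n$: Newton's forward-difference formula $g(k) = \sum_{j=0}^n \binom{k}{j}\Delta^j g(0)$ produces a polynomial $Q$ of degree $\le n$ agreeing with $g$ on $\{0,1,2,\dots\}$, and since the recurrence $\Delta^{n+1}=0$ determines $g(k)$ from $g(k+1),\dots,g(k+n+1)$ it can be run backwards as well, so $g = Q$ on all of $\mathbb{Z}$. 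Hence there is $Q_m \in \mathbb{C}[X]$ with $\deg Q_m \le n$ and $f(k/m) = Q_m(k)$ for every $k\in\mathbb{Z}$; setting $P_m(X) = Q_m(mX)$ I obtain a polynomial of degree $\le n$ that agrees with $f$ on all of $H_m$.

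It then remains to glue the $P_m$ into a single polynomial. If $m \mid m'$ then $H_m \subseteq H_{m'}$, so $P_m$ and $P_{m'}$ agree with $f$, and hence with each other, on the infinite set $H_m$; two polynomials of degree $\le n$ coinciding on an infinite set are equal, so $P_m = P_{m'}$. Because any two moduli have a common multiple, all the $P_m$ coincide with a single $P \in \mathbb{C}[X]$. Finally $\bigcup_m H_m = \mathbb{Q}$, so every $x\in\mathbb{Q}$ lies in some $H_m$, whence $f(x) = P_m(x) = P(x)$ and $f = P|_{\mathbb{Q}}$.

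I expect the only real content to be the one-variable step, namely that a function on $\mathbb{Z}$ killed by an iterated difference is an honest polynomial, which is classical; the remainder is the bookkeeping of the directed family $\{H_m\}$ under divisibility together with the uniqueness of polynomial interpolation. An alternative, more structural route would invoke the Fr\'echet decomposition of a polynomial on an abelian group as a sum $f = \sum_{k\le n} A_k^{*}$ of diagonalizations of symmetric $k$-additive maps $A_k$, observe that additivity on $\mathbb{Q}$ forces each $A_k$ to be $\mathbb{Q}$-multilinear and hence, as $\mathbb{Q}$ is one-dimensional over itself, of the form $A_k(x_1,\dots,x_k) = c_k\, x_1\cdots x_k$, so that $A_k^{*}(x) = c_k x^k$ and $f(x) = \sum_{k\le n} c_k x^k$. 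I prefer the exhaustion argument since it is self-contained and avoids importing the multi-additive machinery.
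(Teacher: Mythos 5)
Your argument is correct and is essentially the paper's own proof: both exhaust $\mathbb{Q}$ by the subgroups $\tfrac{1}{m}\mathbb{Z}\cong\mathbb{Z}$, use the classical fact that a function on $\mathbb{Z}$ annihilated by $\Delta_1^{n+1}$ is an ordinary polynomial, and then glue the resulting polynomials by uniqueness of polynomial interpolation on an infinite set (the paper compares each $p_m$ to $p_1$ on $\mathbb{Z}\subset G_m$, while you pass to common multiples --- a cosmetic difference). Your write-up merely fills in the Newton forward-difference step that the paper labels ``evident.''
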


\begin{proof} Let $f$ be a polynomial on $\mathbb{Q}$. For each $m\in \mathbb{N}$, let $G_m = \{x\in \mathbb{Q}: mx \in \mathbb{Z}\}$. Clearly the restriction of $f$ to $G_m$ is a polynomial on $G_m$.  It is evident that polynomials on $\mathbb{Z}$ are (restrictions of) ordinary polynomials, so since $G_m$ is isomorphic to $\mathbb{Z}$ under the map $x\mapsto mx$, there is an ordinary polynomial $p_m(t)$ such that $f(x) = p_m(x)$ for $x\in G_m$. Since $\mathbb{Z}\subset G_m$, $p_m(t) = p_1(t)$ for $t\in\mathbb{Z}$. Therefore the equality holds for all $t\in \mathbb{R}$; thus $p_1(t) = f(t)$ on $t\in \cup_mG_m = G$.
\end{proof}

Now, we will provide an example showing that in general $(SP_n)$ can differ from $(P_n)$.

\begin{example}\label{n=1} Let us consider the Heisenberg group
\[ G = \left\{ g=
\begin{pmatrix}
1\ & a_g\ & c_g \\
0\ & 1\ & b_g \\
0\ & 0\ & 1
\end{pmatrix}: \ \  a_g, b_g, c_g \in \mathbb{R} \right\}.
\]
Setting
\[
f(g)=a_g b_g -2 c_g
\]
it is easy to check that for any $h,p\in G$,
$\Delta_h\Delta_pf(g)$ is a constant $a_pb_h - a_hb_p$. It follows that
$\Delta_h^2 f = 0$ for each $h$, but $\Delta_p \Delta_h f \neq 0$  in general.
\end{example}

Thus $(SP_1)\not \subseteq (P_1)$, in general. But the following consequence of Theorem \ref{uno} holds:

\begin{corollary}\label{1S2} On an arbitrary group each semipolynomial of degree at most one is a polynomial of degree at most two.
\end{corollary}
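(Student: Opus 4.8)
The plan is to obtain this statement as the special case $\pi = R$ of Theorem~\ref{uno}. The only thing I need beyond that theorem is the identification, recalled at the beginning of Section~3, between the abstract representation-theoretic classes and the concrete function classes on $G$. First I would note that the right regular representation $R\colon g\mapsto R_g$ on $C(G)$ is a representation in the sense of Section~2, and that by construction its $R$-semipolynomials and $R$-polynomials are exactly the functions satisfying \eqref{UD} and \eqref{MD}, respectively. Thus, reading the condition $\Delta_h^2 f = 0$ as the instance of \eqref{same} for $\pi = R$ and reading $\Delta_{h_3}\Delta_{h_2}\Delta_{h_1}f = 0$ as the instance of \eqref{different} for $\pi = R$, we have the identifications $(SP_1) = (SP_1)_R$ and $(P_2) = (P_2)_R$ as subspaces of $C(G)$.

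With these identifications in hand the proof is immediate. Theorem~\ref{uno} asserts the inclusion \eqref{1-2}, namely $(SP_1)_\pi \subset (P_2)_\pi$, for \emph{every} representation $\pi$ of $G$. Specializing to $\pi = R$ gives
\[
(SP_1) = (SP_1)_R \subset (P_2)_R = (P_2),
\]
which is precisely the assertion that a semipolynomial of degree at most one is a polynomial of degree at most two.

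I do not expect any genuine obstacle, since all the substance has been absorbed into Theorem~\ref{uno}: its proof uses the relation \eqref{relation}, valid because $(\rho(g)-I)^2 = 0$ on $(SP_1)$, to derive the anticommutativity \eqref{anti} of the operators $\delta_g = \rho(g)-I$, after which the purely algebraic fact that any product of three pairwise anticommuting operators vanishes yields $\delta_{g_1}\delta_{g_2}\delta_{g_3}x = 0$. The one point worth keeping in mind is that the conclusion cannot be strengthened to $(SP_1)\subset(P_1)$: Example~\ref{n=1} exhibits a function on the Heisenberg group with $\Delta_h^2 f = 0$ for every $h$ but $\Delta_p\Delta_h f \neq 0$ in general, so degree two is optimal and the corollary is sharp.
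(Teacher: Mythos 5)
Your proposal is correct and is precisely the paper's intended argument: the corollary is stated as an immediate consequence of Theorem~\ref{uno}, obtained by specializing the inclusion \eqref{1-2} to the right regular representation $R$ on $C(G)$, under the identifications $(SP_1)=(SP_1)_R$ and $(P_2)=(P_2)_R$ made at the start of Section~3. Your closing remark that Example~\ref{n=1} shows the degree bound is sharp also matches the paper's discussion preceding the corollary.
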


Using Corollary \ref{bound} we obtain
\begin{corollary}\label{bounded}
Regardless to the group $G$, every bounded  semipolynomial is constant.
\end{corollary}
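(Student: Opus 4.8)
The plan is to deduce Corollary \ref{bounded} directly from Corollary \ref{bound} by realizing a bounded semipolynomial as a fixed point of a bounded representation on a suitable Banach space. The key observation is that a scalar semipolynomial $f$ on $G$ is, by definition, an $R$-semipolynomial for the right regular representation; if moreover $f$ is bounded, then the whole orbit $\{R_g f : g \in G\}$ consists of functions with the same supremum norm as $f$, since right translation is an isometry of the sup-norm. So I would first restrict attention to the closed linear span $X$ of the shifts $R_g f$ inside the Banach space of bounded continuous functions $C_b(G)$ with the uniform norm.

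First I would verify that $X$ is invariant and that $\pi := R|_X$ is a bounded (indeed isometric) representation on the Banach space $X$: each $\pi(g)$ is the restriction of $R_g$, which preserves the sup-norm, hence $\|\pi(g)\| = 1$ for all $g$. Next I would check that $f$ remains a $\pi$-semipolynomial, i.e. $f \in (SP)_\pi$; this is immediate because $(\pi(h)-1)^{n+1} f = (R_h - 1)^{n+1} f = \Delta_h^{n+1} f = 0$ by hypothesis, and the relevant vectors all lie in $X$. At this point Corollary \ref{bound} applies: since $\pi$ is a bounded representation on a Banach space, $(SP)_\pi$ equals the set of fixed points of $\pi$, so $\pi(g) f = f$ for every $g \in G$.

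Finally I would translate the fixed-point conclusion back into a statement about $f$ as a function. The equality $\pi(g) f = f$ means $R_g f = f$ in $C_b(G)$, that is $f(xg) = f(x)$ for all $x, g \in G$. Taking $x = e$ gives $f(g) = f(e)$ for every $g \in G$, so $f$ is the constant function with value $f(e)$. This completes the argument.

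I do not expect a genuine obstacle here, since the statement is a straightforward specialization; the only point requiring a little care is confirming that the ambient space should be $C_b(G)$ (bounded continuous functions) rather than all of $C(G)$, so that the uniform norm is actually finite and $X$ is a genuine Banach space on which Corollary \ref{bound} can be invoked. One should also note the harmless subtlety that the orbit need not span a finite-dimensional space, but this is irrelevant: Corollary \ref{bound} imposes no dimension restriction, requiring only boundedness of the representation, which the isometry property supplies.
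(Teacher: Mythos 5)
Your proposal is correct and follows exactly the route the paper intends: the paper derives Corollary \ref{bounded} by applying Corollary \ref{bound} to the (isometric, hence bounded) restriction of the right regular representation to the closed span of the orbit of $f$ in the bounded continuous functions, concluding that $f$ is a fixed point and hence constant. Your added care about working in $C_b(G)$ with the sup-norm is a sensible filling-in of details the paper leaves implicit, not a deviation.
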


Another consequences of Corollary \ref{bound} can be obtained if one considers arbitrary Banach spaces of functions with shift-invariant norms. For example, one has

\begin{corollary}\label{L2} If a semipolynomial belongs to $L^p(G)$, for some $p\geq 1$, then it is constant.
\end{corollary}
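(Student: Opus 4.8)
The plan is to apply Corollary \ref{bound} to the right regular representation, now viewed as acting not on $C(G)$ but on the Banach space $L^p(G)$. First I would take $G$ locally compact (so that $L^p(G)$ is meaningful) and equip it with \emph{right} Haar measure $\mu$. With this choice the right shifts $R_g$ are isometries of $L^p(G)$: right invariance of $\mu$ gives $\int_G |f(xg)|^p\, d\mu(x) = \int_G |f(x)|^p\, d\mu(x)$, so $\|R_g f\|_p = \|f\|_p$. Consequently $R: g\mapsto R_g$ is a bounded (indeed isometric) representation of $G$ on the Banach space $X = L^p(G)$.

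Next I would verify that a semipolynomial $f\in (SP)\cap L^p(G)$ lies in $(SP)_R$ for this representation. Since $f$ solves $\Delta_h^{n+1}f = (R_h - I)^{n+1}f = 0$ pointwise for some $n$ and all $h\in G$, and since $R_h f\in L^p(G)$ whenever $f\in L^p(G)$, the identity $(R_h - I)^{n+1}f = 0$ holds as an equation in $L^p(G)$. Thus $f$ is a $\pi$-semipolynomial for $\pi = R$ on $X = L^p(G)$.

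Now Corollary \ref{bound} applies directly: for the bounded representation $R$ on $L^p(G)$ one has $(SP)_R = \{u\in L^p(G): R_g u = u \text{ for all } g\in G\}$. Hence $R_h f = f$ in $L^p(G)$ for every $h$, that is, $f(xh) = f(x)$ for almost every $x$. Since $f$ is continuous, both sides are continuous functions, so the equality in fact holds for every $x$; setting $x = e$ yields $f(h) = f(e)$ for all $h\in G$, and $f$ is constant.

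The only genuinely delicate point is the compatibility of the measure with the shift direction: because semipolynomials are defined via right shifts, I must use right (rather than left) Haar measure to make the $R_g$ isometric and the representation bounded, which avoids any appeal to unimodularity. (Alternatively, one could pass between the left and right formulations through Remark \ref{left}.) Everything else is a routine transfer of the pointwise functional equation into the $L^p$ setting together with the passage from almost-everywhere to everywhere equality via continuity.
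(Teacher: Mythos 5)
Your proof is correct and is essentially the argument the paper intends: the paper derives Corollary \ref{L2} from Corollary \ref{bound} by viewing the right regular representation on a Banach space of functions with shift-invariant norm, which is exactly your application of Corollary \ref{bound} to $R$ acting isometrically on $L^p(G)$ with right Haar measure. Your additional care about the choice of right Haar measure and the passage from almost-everywhere to everywhere equality via continuity fills in details the paper leaves implicit, but does not change the route.
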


From Lemma \ref{precomp} and Corollary \ref{C10} we get

\begin{corollary}\label{comGr} (i) Assume that $G = G_c$ (that is $G$ is the closure of the subgroup generated by all compact elements of $G$). Then each continuous semipolynomial on $G$ is a constant.

(ii) If $G/G_c$ is commutative then all continuous semipolynomials on $G$ are polynomials.
\end{corollary}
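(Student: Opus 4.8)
The plan is to obtain both parts as direct specializations, to the right regular representation $R$ on $C(G)$, of the two results cited in the sentence preceding the statement. The key is to recall the identification made at the start of Section~3: the $R$-semipolynomials and $R$-polynomials coincide with the function classes $(SP)$ and $(P)$, so that $(SP_n) = (SP_n)_R$ and $(P_n) = (P_n)_R$. Once this is in place, neither part requires any new computation.

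For part (i) I would apply Lemma~\ref{precomp} with $\pi = R$. A continuous semipolynomial $f$ is by definition an element of $(SP) = (SP)_R$, so the lemma yields $R_g f = f$ for every $g\in G_c$. Under the hypothesis $G = G_c$ this holds for all $g\in G$, i.e.\ $f(xg) = f(x)$ for all $x,g\in G$. Specializing to $x = e$ gives $f(g) = f(e)$ for every $g$, so $f$ is constant. The only point to make explicit is the passage from the operator identity $R_g f = f$ to the pointwise equality, which is immediate from $R_g f(x) = f(xg)$.

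For part (ii) I would apply Corollary~\ref{C10} with $\pi = R$. When $G/G_c$ is commutative the corollary gives $(SP_n)_R = (P_n)_R$ for every $n$, that is $(SP_n) = (P_n)$; since every semipolynomial lies in some $(SP_n)$, it lies in the corresponding $(P_n)$ and is thus a polynomial. I do not expect any genuine obstacle here: both assertions are immediate consequences of the general representation-theoretic statements, and the sole thing demanding care is to invoke Lemma~\ref{precomp} and Corollary~\ref{C10} explicitly for $\pi = R$ and to recall that the classes $(SP)_R$, $(P)_R$ are exactly the function-theoretic classes $(SP)$, $(P)$.
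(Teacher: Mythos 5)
Your proof is correct and follows exactly the route the paper intends: the paper derives this corollary directly from Lemma~\ref{precomp} and Corollary~\ref{C10} applied to the right regular representation $R$, which is precisely what you do. No issues.
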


One very special case deserves special mention:

\begin{corollary}\label{order}
If $G$ is generated by elements of finite order then each semipolynomial on $G$ is a constant.
\end{corollary}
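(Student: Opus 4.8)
The plan is to reduce the statement to part (i) of Corollary~\ref{comGr}, which already asserts that every continuous semipolynomial is constant whenever $G = G_c$. So the entire task is to verify that the hypothesis ``$G$ is generated by elements of finite order'' forces $G = G_c$.

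The key elementary observation I would make first is that \emph{every element of finite order is compact}. Indeed, if $h \in G$ satisfies $h^k = e$ for some $k$, then the cyclic subgroup $\langle h \rangle = \{e, h, \dots, h^{k-1}\}$ is finite; a finite subset of a topological group is automatically closed and compact, so the closure of the cyclic subgroup generated by $h$ is compact, which is exactly the definition of $h$ being a compact element. Thus all the prescribed generators of $G$ are compact elements.

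Next I would assemble this into the equality $G = G_c$. By definition $G_c$ is the closed subgroup generated by all compact elements of $G$. Since the chosen generators of $G$ are compact by the previous step, the subgroup they generate is contained in $G_c$; but that subgroup is (dense in, or equal to) $G$ by hypothesis, and $G_c$ is closed, so $G \subseteq G_c$. The reverse inclusion $G_c \subseteq G$ is trivial, whence $G = G_c$. (This argument is insensitive to whether ``generated'' is read algebraically or topologically: in either case the closedness of $G_c$ absorbs the closure.) Applying Corollary~\ref{comGr}(i) then finishes the proof.

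I do not anticipate a genuine obstacle here, as the corollary is essentially an immediate specialization of Corollary~\ref{comGr}(i). The only point requiring any care is the first step---correctly invoking the definition of a compact element and noting that finiteness of $\langle h\rangle$ already yields compactness of its closure---together with the bookkeeping that $G_c$ is a \emph{closed} subgroup, so that the generators being compact suffices to conclude $G = G_c$ rather than merely $G$ being dense in $G_c$.
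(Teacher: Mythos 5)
Your proof is correct and follows exactly the route the paper intends: the corollary is stated without proof as an immediate specialization of Corollary~\ref{comGr}(i), obtained by noting that an element of finite order generates a finite (hence compact) cyclic subgroup, so that $G=G_c$. Your careful handling of the closedness of $G_c$ is exactly the right bookkeeping.
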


Let us now turn to  Theorem \ref{quotient}.

For every closed normal subgroup $H$ of $G$, let $\ q^*: C(G/H) \to C(G)\ $ be the map associated with the standard epimorphism $\ q: G\to G/H$. It is easy to see that if $w$ is a (semi)polynomial on $\ G/H$, then $\ q^*(w)$ is a (semi)polynomial on $G$. The $R$-version of Theorem \ref{quotient} states that all (semi)polynomials on $G$ are of this form with $H = G_c$.

\begin{theorem}\label{R-quotient}
If $q: G \to G/G_c$ is the standard epimorphism, then $q^*$ bijectively  maps $(SP)(G/G_c)$ onto $(SP)(G)$ and $(P)(G/C_c)$ onto $(P)(G)$.
\end{theorem}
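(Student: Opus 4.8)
The plan is to realize this statement as the concrete $R$-instance of Theorem \ref{quotient}, applied to the right regular representation $\pi = R$ on $X = C(G)$. There the distinguished subspace is $X_0 = C(G)^{G_c}$, the functions fixed by $R_g$ for all $g\in G_c$, and the whole argument consists of identifying $X_0$ with $q^*(C(G/G_c))$, checking that $q^*$ transports the equations \eqref{MD} and \eqref{UD} correctly, and invoking Lemma \ref{precomp} to guarantee that every semipolynomial already lives in $X_0$.

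First I would describe $X_0$ explicitly. Since $R_g f(h) = f(hg)$, the condition $f\in X_0$ means $f(hg)=f(h)$ for all $h\in G$ and $g\in G_c$, i.e. $f$ is constant on each coset $hG_c$ (left and right cosets agree, as $G_c$ is normal). Hence $f$ factors as $f = w\circ q = q^*(w)$ for a function $w$ on $G/G_c$, and since $q$ is an open quotient map, $w\in C(G/G_c)$; conversely every $q^*(w)$ lies in $X_0$. Thus $X_0 = q^*(C(G/G_c))$. As $q$ is surjective, $q^*$ is injective, so $q^*$ is a linear bijection of $C(G/G_c)$ onto $X_0$.

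Next I would record the intertwining relation. A one-line computation using $q(gh)=q(g)q(h)$ gives, for $w\in C(G/G_c)$ and $h\in G$, the identity $\Delta_h(q^*w) = q^*(\Delta_{q(h)}w)$ (left-hand difference operator on $G$, right-hand one on $G/G_c$); iterating yields $\Delta_h^{k}(q^*w) = q^*(\Delta_{q(h)}^{k}w)$ and likewise for mixed products $\Delta_{h_1}\cdots\Delta_{h_k}$. Because $q^*$ is injective and $q(h)$ ranges over all of $G/G_c$ as $h$ ranges over $G$, this shows that $w$ solves \eqref{UD} (resp. \eqref{MD}) of degree $n$ on $G/G_c$ if and only if $q^*(w)$ solves it on $G$. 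Consequently $q^*$ carries $(SP)(G/G_c)$ injectively into $(SP)(G)$ and $(P)(G/G_c)$ injectively into $(P)(G)$, with image exactly the semipolynomials (resp. polynomials) that happen to lie in $X_0$.

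Finally, to obtain surjectivity I would appeal to Lemma \ref{precomp}: every $f\in (SP)(G) = (SP)_R$ is fixed by $R_g$ for all $g\in G_c$, so $(SP)(G)\subseteq X_0$ and a fortiori $(P)(G)\subseteq X_0$. Therefore each semipolynomial $f$ on $G$ equals $q^*(w)$ for a unique $w\in C(G/G_c)$, and by the injectivity/intertwining step $w$ is itself a semipolynomial on $G/G_c$ of the same degree; the identical reasoning handles $(P)$. This gives the claimed bijections. The one step I would flag as the crux is precisely this inclusion $(SP)(G)\subseteq X_0$ — recognizing that Lemma \ref{precomp} is exactly what forces every semipolynomial to descend through $q$; everything else is the formal transport of \eqref{MD} and \eqref{UD} along the intertwiner $q^*$.
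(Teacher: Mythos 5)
Your proof is correct and follows essentially the same route as the paper: the paper phrases the argument via the inverse map $T = (q^*)^{-1}$ intertwining $R^c$ with the right regular representation of $G/G_c$ and then cites Theorem \ref{quotient}(ii), while you work directly with the relation $\Delta_h(q^*w) = q^*(\Delta_{q(h)}w)$ and invoke Lemma \ref{precomp} explicitly for the inclusion $(SP)(G)\subseteq X_0$ — the same ingredient that underlies Theorem \ref{quotient}(ii). You have correctly identified that inclusion as the crux of surjectivity.
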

\begin{proof} We apply Theorem~ \ref{quotient} to the representation $R$. So $X = C(G)$ and  the space $X_0$ is the set of all functions $f\in C(G)$ such that $f(gk) = f(g)$ for all $g\in G, k\in G_c$. Clearly $X_0$ coincides with the image of $C(G/G_c)$ with respect to $q^*$. Since $q^*$ is injective, there is a map $T: X_0 \to C(G/G_c)$ inverse to $q^*$. Then it is easy to check that
$$TR^c(q(h)) = R_{q(h)}T.$$
Indeed by definition $(Tf)(q(g)) = f(g)$ for all $f\in X_0, g\in G$. Therefore $(TR^c(q(h))f)(q(g)) = (R^c(q(h))f)(g) = f(gh)$ and $(R_{q(h)}Tf)(q(g)) = (Tf)(q(g)q(h)) = (Tf)(q(gh))= f(gh)$.

 Thus $T$ defines the similarity of the representation $R^c$ to the right regular representation of the group $G/G_c$. In particular, it maps the polynomials (semipolynomials) on $G/G_c$ to those of the representation $R^c$. But Theorem \ref{quotient} identifies polynomials (semipolynomials) of $R^c$ with those of $G$. This is what we need.
\end{proof}

We will finish this section by two examples that demonstrate the use of Theorem \ref{R-quotient}.

\begin{example} Let $G = GL(n,\mathbb{C})$ the group of all invertible $n\times n$ matrices. Its subgroup $SL(n,\mathbb{C})$ consisting of all matrices with determinant 1 is semisimple and therefore is contained in $G_c$ (see Example \ref{semis}). Clearly each matrix $\lambda 1$ with $|\lambda| = 1$ is compact and therefore also belongs to $G_c$. Therefore $G_c$ contains the subgroup $H$ of all matrices $a$ with $|\det(a)| = 1$. On the other hand the quotient $G/H$ is isomorphic via the map $a \mapsto \log|\det(a)|$ to the group $\mathbb{R}$, so it does not contain non-trivial compact elements. So $H = G_c$ and polynomials (= semipolynomials) $f$ of degree $m$ on $G$ are of the form $f(a) = p_m(\log|\det(a)|)$ where $p_m$ is an ordinary polynomial of degree $m$ on $\mathbb{R}$.
\end{example}

In particular, the case $m=1$ leads to the well known description (see \cite{Golab, Hosszu, Kuch, Kuczma}) of scalar multiplicative functions ($f(XY)=f(X)f(Y)$) on the set of all $n\times n$ matrices: $f(X)=\phi (|det X|)$, where $\phi$ is a multiplicative function on $\mathbb{R}$.

\begin{example} Let $G = \mathcal{T}(n,\mathbb{C})$ the group of upper-triangular $n\times n$ matrices. Its subgroup $H$ of all matrices with $|a_{11}| = |a_{22}| = ... = |a_{nn}| = 1$ has the property $H_c = H$ (see Example \ref{solv}) and therefore is contained in $G_c$. On the other hand $G/H$ is isomorphic to $\mathbb{R}^n$ (via the map $a\mapsto (\log|a_{11}|,...,\log|a_{nn}|)$ so $G_c = H$ and any (semi)polynomial on $G$ is of the form $f(a) = p(\log|a_{11}|,...,\log|a_{nn}|)$, where $p$ is an ordinary polynomial on $\mathbb{R}^n$.
\end{example}

\bigskip

\section{Montel's type statements}

We now consider the situation when  $f\in C(G)$ behaves as a polynomial or as a semipolynomial on generators of $G$. The following theorem extends \cite[Lemma 15]{L} to non-commutative groups.

\begin{theorem}\label{LNC}
Let $G$ be a topological group and let a subset $E\subseteq G$ topologically generate $G$.  If for $f\in C(G)$  and some $m \in \mathbb{N}$,
$$
\Delta_{h_1}\cdots\Delta_{h_m}f=0  \qquad {\rm \ for\ any \ \ } h_1,\ldots,h_m\in E,
$$
 then $f\in (P_{m-1})$.
\end{theorem}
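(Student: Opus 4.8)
The plan is to argue by induction on $m$, working throughout with the regular representation $R$ on $C(G)$, so that $\Delta_h=R_h-I$ and the target class is $(P_{m-1})=(P_{m-1})_R$. For $m=1$ the hypothesis says $R_hf=f$ for every $h\in E$; the set $\{h:R_hf=f\}$ is plainly a closed subgroup of $G$, and since it contains $E$ it must be all of $G$, so $f$ is constant, i.e. $f\in(P_0)$. This base case already exhibits the mechanism I intend to exploit in general: upgrade ``the condition holds on $E$'' to ``it holds on the subgroup $\langle E\rangle$'' by a subgroup argument, and then to ``it holds on $G=\overline{\langle E\rangle}$'' by a closedness argument.

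For the inductive step, I assume the statement for $m-1$ and suppose $\Delta_{h_1}\cdots\Delta_{h_m}f=0$ for all $h_i\in E$. Fixing the last step $h_m\in E$ and applying the induction hypothesis to $g:=\Delta_{h_m}f$ (which satisfies $\Delta_{h_1}\cdots\Delta_{h_{m-1}}g=0$ for all $h_1,\dots,h_{m-1}\in E$), I obtain $\Delta_hf\in(P_{m-2})$ for every $h\in E$. The whole problem then reduces to promoting this from $E$ to $G$: I introduce
\[
S=\{\, h\in G:\Delta_hf\in(P_{m-2})\,\},
\]
and it suffices to prove $S=G$, because $\Delta_hf\in(P_{m-2})$ for all $h\in G$ is exactly the assertion $f\in(P_{m-1})$.

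The core of the argument is therefore to show that $S$ is a closed subgroup of $G$; since $E\subseteq S$ and $E$ topologically generates $G$, this forces $S=G$. That $S$ is a subgroup rests on Lemma \ref{inv}: $(P_{m-2})$ is a closed, $R$-invariant subspace, so $R_g$ and hence $\Delta_g=R_g-I$ map it into itself. For products I use relation (\ref{main}), $\Delta_{gh}=\Delta_g+\Delta_h+\Delta_g\Delta_h$, whence $\Delta_{gh}f=\Delta_gf+\Delta_hf+\Delta_g(\Delta_hf)\in(P_{m-2})$ when $g,h\in S$; for inverses I use $\Delta_{h^{-1}}f=-R_{h^{-1}}\Delta_hf$, which lies in $(P_{m-2})$ by $R$-invariance. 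Closedness of $S$ follows because for each fixed $g_1,\dots,g_{m-1}\in G$ and $x\in G$ the number $[\Delta_{g_1}\cdots\Delta_{g_{m-1}}\Delta_hf](x)$ is a finite alternating sum of values of $f$ at points depending continuously on $h$, so each defining condition is a closed condition on $h$ in the compact-open topology.

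The step I expect to be most delicate is exactly the inverse-closure of $S$ together with the passage from the algebraic subgroup $\langle E\rangle$ to its closure. Both hinge on the $R$-invariance of $(P_{m-2})$ (Lemma \ref{inv}) and on the continuity of $f$: without invariance one could not control $\Delta_{h^{-1}}f$ from $\Delta_hf$, and without closedness the conclusion would only hold on the dense subgroup generated by $E$. Everything else is the essentially formal bookkeeping of the difference calculus.
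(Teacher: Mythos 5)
Your proof is correct and follows essentially the same route as the paper's: induction on $m$, reduction via the induction hypothesis to the statement $\Delta_h f\in(P_{m-2})$ for $h\in E$, and then the observation that the set of such $h$ is a closed subgroup of $G$ (your set $S$ is exactly the paper's set $H$), using the identity $\Delta_{gh}=\Delta_g+\Delta_h+\Delta_g\Delta_h$ for products and an equivalent identity for inverses. The only cosmetic differences are that you invoke the $R$-invariance of $(P_{m-2})$ from Lemma \ref{inv} and the relation $\Delta_{h^{-1}}f=-R_{h^{-1}}\Delta_h f$, where the paper instead uses $\Delta_{a^{-1}}=-\Delta_a-\Delta_{a^{-1}}\Delta_a$ directly.
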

\begin{proof} We should prove that $\ \Delta_{g_1}\cdots\Delta_{g_m}f=0\ $ for all $\ g_1,\ldots, g_m\in G$.

Let us  use induction on $m$. Suppose that for $m-1$ the statement is true. Then since for each $h \in E$, $\Delta_{h_1}\cdots\Delta_{h_{m-1}}(\Delta_h f)=0$, we get
\begin{equation}\label{*}
\Delta_{g_1}\cdots\Delta_{g_{m-1}}(\Delta_h f)=0, \qquad \text{for any } g_1,\ldots, g_{m-1} \in G.
\end{equation}
Let $H$ be the set of those $h\in G$ for which (\ref{*}) holds for every $g_1,\ldots, g_{m-1} \in G.$  Obviously, $H\supset E$.  Furthermore,
\begin{equation}\label{product}
\Delta_{ab}=\Delta_a+\Delta_b+\Delta_a\Delta_b
\end{equation} and
\begin{equation}\label{inverse}
\Delta_{a^{-1}} = -\Delta_a -\Delta_{a^{-1}}\Delta_a.
\end{equation}
It follows that if $a,b \in H$ then
\begin{equation*}
\begin{split}
\Delta_{g_1}\cdots\Delta_{g_{m-1}}\Delta_{ab}f=& \Delta_{g_1}\cdots\Delta_{g_{m-1}}\Delta_{a}f +\Delta_{g_1}\cdots\Delta_{g_{m-1}}\Delta_{b}f+ \\
& \Delta_{g_1}\cdots\Delta_{g_{m-1}}\Delta_{a}\Delta_{b}f=0
\end{split}
\end{equation*}
and, similarly,
$$
\Delta_{g_1}\cdots\Delta_{g_{m-1}}\Delta_{a^{-1}}f = -\Delta_{g_1}\cdots\Delta_{g_{m-1}}\Delta_{a}f - \Delta_{g_1}\cdots\Delta_{g_{m-1}}\Delta_{a^{-1}}\Delta_{a}f =0.
$$
So, $H$ is a subgroup of $G$. It follows immediately from the continuity of $f$ that $H$ is closed. Therefore $H = G$.

The case $m=1$ follows from the same relations (\ref{product}) and (\ref{inverse}): the set of those $h\in G$ for which $\Delta_h f=0$ is a closed subgroup of $G$ containing $E$.
\end{proof}
\begin{remark}\label{non-scalar}
The same proof shows that a similar statement is true if $f$ takes values in arbitrary Abelian group instead of $\mathbb{C}$.
\end{remark}

Let us consider now the Montel problem for semipolynomial relations: does any function satisfying the conditions
 \begin{equation}\label{Msp}
 \Delta_h^{n+1}f=0, \text{ for all }h\in E, \text{ where } E \text{ topologically generates } G
 \end{equation}
belong to $(SP_n)(G)$?

\begin{theorem}\label{MCK}
Let $G$ be topologically generated by a subset $E \subset G$, and let $f\in C(G)$.
Assume that for each $h \in E$, there is $n(h)$ with $ \Delta_h^{n(h)}f=0$.
 If

 (a) $E$ consists of compact elements,

or

 (b) $G$ is commutative and $E$ is finite

then $f \in (SP) = (P)$.
 \end{theorem}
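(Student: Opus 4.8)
The plan is to prove in both cases that $f\in(SP)$; the identity $(SP)=(P)$ then comes for free from results already established. In case (a) the generating set $E$ consists of compact elements, so each element of $E$ lies in $G_c$ and hence $G=\overline{\langle E\rangle}=G_c$, whereupon Corollary~\ref{comGr}(i) gives $(SP)=(P)=\{\text{constants}\}$; in case (b) the group is commutative, so $(SP)=(P)$ by Theorem~\ref{comm}. Thus the whole content is to promote the hypothesis ``$\Delta_h^{n(h)}f=0$ for $h\in E$'' to ``$\Delta_g^{N}f=0$ for all $g\in G$'' for a suitable $N$, and I would do this by two different mechanisms.

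For case (a) I would first show that for a single compact $h\in E$ one already has $\Delta_hf=0$, reusing the boundedness argument of Lemma~\ref{precomp}. Expanding $R_h^k=(1+\Delta_h)^k$ and using $\Delta_h^{n(h)}f=0$ gives, for each fixed $x$,
\[
f(xh^k)=\sum_{j=0}^{n(h)-1}\binom{k}{j}(\Delta_h^jf)(x),
\]
a polynomial in $k$. Since $h$ is compact the set $\{xh^k:k\in\mathbb{N}\}$ is precompact, so $k\mapsto f(xh^k)$ is bounded, which forces this polynomial to be constant; in particular $(\Delta_hf)(x)=0$. As $x$ is arbitrary, $\Delta_hf=0$ for every $h\in E$. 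Now the set $\{h\in G:\Delta_hf=0\}$ is a closed subgroup of $G$ (this is precisely the case $m=1$ of Theorem~\ref{LNC}, via the relations \eqref{product} and \eqref{inverse} together with the continuity of $f$); it contains $E$, hence equals $G$. Thus $\Delta_gf=0$ for all $g$ and $f$ is constant, which settles case (a).

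For case (b) write $E=\{h_1,\dots,h_r\}$, put $n_i=n(h_i)$, and let $X_0$ be the linear span of all shifts $R_gf$, $g\in G$. Because $G$ is commutative each $\Delta_{h_i}$ commutes with every $R_g$, so $\Delta_{h_i}^{n_i}(R_gf)=R_g(\Delta_{h_i}^{n_i}f)=0$; hence $\Delta_{h_1},\dots,\Delta_{h_r}$ restrict to $X_0$ as commuting nilpotent operators with $\Delta_{h_i}^{n_i}|_{X_0}=0$. Let $\mathcal{A}$ be the non-unital algebra they generate on $X_0$. Any monomial $\Delta_{h_1}^{a_1}\cdots\Delta_{h_r}^{a_r}$ of total degree $a_1+\cdots+a_r\ge N:=1+\sum_{i=1}^r(n_i-1)$ must have some $a_i\ge n_i$ and so vanishes on $X_0$; therefore $\mathcal{A}^N=0$ on $X_0$. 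Now for $g\in G_0:=\langle E\rangle$ write $g=\prod_i h_i^{m_i}$; since each $1+\Delta_{h_i}$ is invertible on $X_0$ with inverse a polynomial in $\Delta_{h_i}$, the operator $\Delta_g|_{X_0}=\prod_i(1+\Delta_{h_i})^{m_i}-1$ is a polynomial in the $\Delta_{h_i}$ with zero constant term, i.e. $\Delta_g|_{X_0}\in\mathcal{A}$. Consequently $\Delta_g^N|_{X_0}\in\mathcal{A}^N=0$, so $\Delta_g^Nf=0$ for every $g\in G_0$. Finally, for fixed $x$ the map $g\mapsto(\Delta_g^Nf)(x)=\sum_{j}\binom{N}{j}(-1)^{N-j}f(xg^j)$ is continuous, so $\{g:\Delta_g^Nf=0\}$ is closed; containing the dense subgroup $G_0$, it is all of $G$. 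Hence $f\in(SP_{N-1})\subset(SP)$, and commutativity gives $f\in(P)$.

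The routine part is the passage from $G_0$ to $G$ by density and the reduction $(SP)=(P)$, both of which use tools already in the paper. The point where the hypotheses really enter is the construction of the uniform exponent $N$ in case (b): commutativity is what lets the $\Delta_{h_i}$ be simultaneously treated as a nilpotent commutative algebra, and finiteness of $E$ is exactly what makes $N=1+\sum_i(n_i-1)$ finite. I expect the main thing to get right is that $\Delta_g$ lands in the nilpotent ideal $\mathcal{A}$ for arbitrary (including negative) exponents $m_i$, which rests on the invertibility of $1+\Delta_{h_i}$ on $X_0$; without commutativity or without finiteness this uniform nilpotency collapses, consistent with the theorem asserting only (a) and (b).
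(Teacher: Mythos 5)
Your proof is correct. In case (a) you follow essentially the paper's route: the paper invokes the argument of Lemma \ref{precomp} (whose proof only needs $\Delta_h^{n(h)}f=0$ for the given compact $h$, not full membership in $(SP)$) to get $R_hf=f$ for $h\in E$, and then passes to all of $G$ because $\{h:R_hf=f\}$ is a closed subgroup; you spell out both steps explicitly, which is if anything a little more careful than the paper's ``it follows from Lemma \ref{precomp}.'' In case (b) you take a genuinely different path. The paper observes that for $m=\sum_j n(h_j)$ any mixed difference $\Delta_{h_{i_1}}\cdots\Delta_{h_{i_m}}f$ with steps in $E$ can be rearranged (by commutativity) into a monomial $\Delta_{h_1}^{\alpha_1}\cdots\Delta_{h_s}^{\alpha_s}f$ in which some $\alpha_k\ge n(h_k)$ by pigeonhole, hence vanishes, and then applies the polynomial Montel theorem (Theorem \ref{LNC}) to conclude $f\in(P_{m-1})$ directly. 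You instead build the commutative nilpotent algebra $\mathcal{A}$ generated by the $\Delta_{h_i}$ on the span of shifts, use the same pigeonhole to get $\mathcal{A}^N=0$, and then show $\Delta_g\in\mathcal{A}$ for every $g$ in the subgroup generated by $E$ (the invertibility of $1+\Delta_{h_i}$ with polynomial inverse, needed for negative exponents, is exactly the right point to check), concluding $\Delta_g^Nf=0$ on a dense subgroup and hence everywhere by continuity; only then do you invoke $(SP)=(P)$ from Theorem \ref{comm}. The paper's argument is shorter because Theorem \ref{LNC} already packages the passage from $E$ to $G$; yours is self-contained on that point, gives the slightly sharper exponent $N=1+\sum_i(n_i-1)$ instead of $\sum_i n_i$, and lands first in $(SP)$ rather than $(P)$, so it makes visible exactly where commutativity and finiteness of $E$ are used. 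Both are valid.
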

\begin{proof} If (a) holds then it follows from Lemma \ref{precomp} that $R_hf =f$ for all $h\in E$, and this immediately implies that the same holds for all $h\in G$. Therefore $f$ is constant.

 (b) Let $G$ be commutative and $E = \{ h_1, \ \ldots\ h_s\}$. Let us set $m = \sum_{j=1}^sn(h_j)$ and take arbitrary $h_{i_1}\ldots ,h_{i_m} \in E$.
   Since commutativity of $G$ implies commutativity of the difference operators, we have
\[
\Delta_{h_{i_1}}\cdots \Delta_{h_{i_m}}f=\Delta_{h_1}^{\alpha_1}\cdots \Delta_{h_s}^{\alpha_s}f
\]
for some $\alpha_1,\ldots, \alpha_s$ such that $m=\alpha_1+\cdots+\alpha_s$.
It follows that  $\alpha_k\geq n(h_k)$ for some $k$ whence $\Delta_{h_k}^{\alpha_k}f = 0$, and therefore
\[
\Delta_{h_{i_1}}\cdots \Delta_{h_{i_m}}f=0.
\]
The proof ends by using Theorem \ref{LNC}.
 \end{proof}

\medskip

\begin{remark}\label{inf-gen}
The assumption that $G$ is finitely generated cannot be dropped. Indeed, let $\ G = \mathbb{Z}_0^{\infty}$, \, the group of all sequences $\overrightarrow{n} = (n^{(1)},n^{(2)},\ \ldots \ )$ of integers with only finite numbers of non-zero elements. Let $\ E = \{\overrightarrow{e}_i: 1\le i<\infty\}\ $ be the standard set of generators: $\ \overrightarrow{e}_i^{(j)} = \delta_{ij}$. \, Define a function $f$ on $G$ by the formula
$$
f(\overrightarrow{n}) = n^{(1)} + n^{(2)}n^{(3)} + n^{(4)}n^{(5)}n^{(6)} +\ \ldots \ = \sum_{k=1}^{\infty}\prod_{i=N_{k}+1}^{N_{k+1}}n^{(i)},
$$
where $\ N_k = k(k+1)/2$.\, Then it is easy to see that $\ \Delta_{\overrightarrow{e}_i}^2f = 0$, \, for all $i$,\,  but $\ \left(\prod_{i=N_{k}+1}^{N_{k+1}}\Delta_{\overrightarrow{e}_i}\right)f \neq 0$, \, for each $k$, \, and therefore $\ f\notin (P) = (SP)$.
\end{remark}

\medskip

 We saw that the semipolynomial version of the Montel theorem holds for finitely generated commutative groups and for groups generated by compact elements. Now we will show that commutativity modulo $G_c$ is not sufficient: a semipolynomial on generators need not be a semipolynomial (= polynomial) on $G$ even if $G$ is a free product of a finite group and a singly generated group.

\begin{theorem} \label{NonCom}
Let $$G=\langle a,b|\ a^2=e\rangle$$ (the free product of $\mathbb{Z}$ and $\mathbb{Z}_2$). Then there exists a function $f:G\to\mathbb{C}$ such that $\Delta_a^2f=\Delta_b^2f=0$ and $f$ is not a semipolynomial nor a quasipolynomial on $G$.
\end{theorem}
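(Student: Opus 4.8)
The plan is to construct $f$ explicitly from the normal form of elements of $G$ and to read off every required property from a single super-exponentially growing sequence of weights. Since $a^2=e$, each $g\in G$ has a unique reduced expression $g=b^{m_0}ab^{m_1}a\cdots ab^{m_k}$ with $k\ge 0$, where $m_0,m_k\in\Z$ are arbitrary and the interior exponents $m_1,\dots,m_{k-1}$ are nonzero; write $k(g)$ for the number of occurrences of $a$. Fix a sequence $(w_i)_{i\ge 0}$ to be specified and set $f(g)=\sum_{i=0}^{k(g)} w_i m_i$. First I would check that $f$ is well defined (uniqueness of the reduced form) and satisfies the two generator equations. Right multiplication by $a$ only appends or cancels a trailing $a$, i.e.\ creates or deletes a block with exponent $0$, so $f(ga)=f(g)$; as $a^2=e$ gives $\Delta_a^2=-2\Delta_a$, this already yields $\Delta_a^2 f=0$. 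Right multiplication by $b$ changes only the last exponent, $f(gb^t)=f(g)+t\,w_{k(g)}$, so $t\mapsto f(gb^t)$ is affine and $\Delta_b^2 f=0$.

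The key computation is the behaviour of $f$ along the cyclic subgroup $\langle ba\rangle$, which is infinite because $ba$ is cyclically reduced of length $2$. From $(ba)^n=b\,a\,b\,a\cdots b\,a$ one reads off the reduced form with $m_0=\cdots=m_{n-1}=1$ and $m_n=0$, whence $\phi(n):=f((ba)^n)=\sum_{i=0}^{n-1}w_i$ for $n\ge 0$. Everything now reduces to choosing $(w_i)$ so that $\phi$ is badly behaved; I would take, for instance, $w_i=2^{i^2}$, so that $\phi$ grows faster than every exponential $CR^n$.

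To see that $f$ is not a semipolynomial I use the direction $h=ba$. If $\Delta_{ba}^{N}f=0$ held on $G$, then evaluating at the points $(ba)^m$ would give $\Delta^N\phi(m)=0$ for all $m\ge 0$ (ordinary $N$-th finite difference), forcing $\phi$ to agree with a polynomial of degree $<N$ on $\Z_{\ge 0}$; this is impossible since $\phi$ grows super-exponentially. Hence $\Delta_{ba}^{N}f\neq 0$ for every $N$, so $f\notin(SP)$. To see that $f$ is not a quasipolynomial I show its right shifts span an infinite-dimensional space. If instead $\dim\operatorname{span}\{R_g f:g\in G\}=d<\infty$, then the $d+1$ functions $R_{(ba)^m}f$ with $0\le m\le d$ are linearly dependent, and evaluating a nontrivial relation $\sum_m\lambda_m R_{(ba)^m}f=0$ at the points $(ba)^n$ yields a nontrivial constant-coefficient linear relation $\sum_m\lambda_m\phi(n+m)=0$ for $n\ge 0$. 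Any sequence obeying such a relation grows at most exponentially, contradicting the super-exponential growth of $\phi$. Therefore $f\notin(QP)$.

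The main obstacle is purely bookkeeping: verifying that $f(g)=\sum_i w_i m_i$ is consistent under right multiplication, in particular the borderline cases where the last exponent passes through $0$, so that a block is created or destroyed. One must confirm that the slope stays equal to $w_{k(g)}$ throughout and that inserting or deleting a trailing $a$ never alters the value. Once this is settled the conceptual point is transparent: non-commutativity lets right multiplication by $ba$ push the active block index $k$ steadily upward, so the single $a$-invariant, $b$-affine function $f$ accumulates the weights $w_0,w_1,\dots$; choosing them super-exponential makes $\phi=\sum w_i$ neither a polynomial (so $f\notin(SP)$) nor a solution of a linear recurrence (so $f\notin(QP)$) at one stroke.
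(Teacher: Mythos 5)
Your proof is correct and follows essentially the same strategy as the paper's: an explicitly constructed function that is invariant under right multiplication by $a$ and affine in the last $b$-exponent, whose values along the cyclic subgroup generated by $ba$ (the paper uses $ab$) grow super-exponentially, which rules out both semipolynomial and quasipolynomial behaviour via growth estimates. Your closed formula $f(g)=\sum_i w_i m_i$ replaces the paper's inductive definition on the sets $E_k$, and your linear-recurrence argument for excluding $(QP)$ replaces the paper's matrix-element norm bound, but these are only cosmetic differences.
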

\begin{proof}  Each element of $G$ can be written in the form $(a)b^{n_1}a \ldots ab^{n_k}(a)$ with some non-negative integers $n_i$. We write the first and the last symbols in brackets to show that they can be present or absent.

A function $f$ which is a ``2-semipolynomial on the generators'' satisfies the condition
\begin{equation}\label{exp-2}
\left\{\begin{aligned}
&f(xa^2)=2f(xa) - f(x)\\
&f(xb^2)=2f(xb) - f(x)
\end{aligned} \right.
\end{equation}
Since $a^2=e$, the first equation of the system is equivalent to the relation $f(xa)=f(x)$. It is easy to deduce from the second equation that $f(xb^n) = nf(xb) - (n-1)f(x)$ for every integer $n$.

To construct a function $f$ satisfying (\ref{exp-2}) and which is not a quasipolynomial nor a  semipolynomial, let us denote by $E_k$ the set of all elements $g\in G$ of the form $g = b^{n_1}ab^{n_2}...ab^{n_{k}}$; for $k = 0,1$ we mean $E_0 = \{e\}$, $E_1 = \{b^j: j = 1,...\}$. Then each element of $G$ either belongs to $E_k$, for some $k\ge 1$, or can be written in the form $ah$, $ha$, or $aha$, where $h\in E_k$. We firstly choose a sequence of numbers $\alpha_k$, $k = 1,...$ arbitrarily.

 The function $f$ will be defined inductively as follows. Let $f(e) = 0$. Now if for all $h\in E_k$ the function is defined, then we set $f(ah) = f(ha) = f(aha) = f(h)$, $f(hab^m) = m\alpha_k - (m-1)f(h)$. Now $f$ is defined on $E_{k+1}$. Proceeding in this way we define $f$ on $G$.

 It follows easily from the definition that the conditions (\ref{exp-2}) are satisfied. On the other hand if we choose  $\alpha_k$ growing more quickly than exponents (for example $\alpha_k = k!$) then the orbit of $f$ cannot be contained in a finite-dimensional subspace. Indeed for $h = ab$, we have $f(h^k) = \alpha_k$, but if $f$ is a matrix element then $$R_{h^k}f = \langle \pi(h)^k\xi,\eta\rangle $$ and $\|R_{h^k}f\| \le C \|\pi(h)\|^k$. Since $\varphi\mapsto \varphi(e)$ is a linear functional on the linear span of the orbit, one has that $$|f(h^k)| = |R_{h^k}f (e)| \le C\|R_{h^k}f\|$$ has at most exponential growth. This is a contradiction which proves that $f$ is not a quasipolynomial.

The same argument shows that $f$ is not a semipolynomial. Indeed if $(R_h -1)^n f = 0$, for some $h\in G$,  then the restriction of $f$ to the subgroup generated by $h$ is a quasipolynomial. Hence $f(h^k)$ has a polynomial growth, which contradicts our condition $f(h^k) = \alpha_k$ for $h = ab$.
\end{proof}

In conclusion we obtain a Montel type result for bounded functions.

\begin{corollary}\label{m_f_c} Let a group $G$ be topologically generated by a subset $E \subset G$, and let $f\in C(G)$ be bounded.
If for each $h \in E$, there is $n(h)$ such that
 $\Delta_h^{n(h)}f=0$,
 then $f$ is constant.
\end{corollary}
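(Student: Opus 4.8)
The plan is to reduce everything to the $m=1$ case of Theorem \ref{LNC}. Once we know that $\Delta_h f = 0$ for every generator $h \in E$, that theorem (applied with $m=1$) immediately yields $f \in (P_0) = \Phi_R$, i.e.\ $f$ is constant. So the entire task is to upgrade the hypothesis $\Delta_h^{n(h)} f = 0$ to the much stronger $\Delta_h f = 0$, and this is exactly where boundedness enters.

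First I would fix a generator $h \in E$ and an arbitrary point $x \in G$, and study the behaviour of $f$ along the cyclic orbit $\{xh^k : k \in \mathbb{Z}\}$. Setting $g_x(k) = f(xh^k)$, a straightforward induction based on the identity $(\Delta g_x)(k) = f(xh^{k+1}) - f(xh^k) = (\Delta_h f)(xh^k)$ shows that $(\Delta^j g_x)(k) = (\Delta_h^j f)(xh^k)$ for all $j$ and all $k \in \mathbb{Z}$, where $\Delta$ denotes the ordinary forward difference on sequences. Taking $j = n(h)$ and using the hypothesis $\Delta_h^{n(h)} f = 0$ gives $\Delta^{n(h)} g_x = 0$ identically, so that $g_x$ is the restriction to $\mathbb{Z}$ of an ordinary polynomial in $k$ of degree at most $n(h) - 1$.

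Now boundedness does the work: since $|g_x(k)| = |f(xh^k)| \le \sup_G|f| < \infty$ for every $k \in \mathbb{Z}$, the polynomial $g_x$ is bounded on the whole of $\mathbb{Z}$ and is therefore constant. Comparing its values at $k=0$ and $k=1$ gives $f(xh) = f(x)$, and since $x$ was arbitrary this means $R_h f = f$, i.e.\ $\Delta_h f = 0$. As $h \in E$ was arbitrary, we obtain $\Delta_h f = 0$ for every $h \in E$.

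Finally I would invoke the $m=1$ case of Theorem \ref{LNC}, whose proof shows that the set $\{h \in G : \Delta_h f = 0\}$ is a closed subgroup of $G$ containing $E$, hence all of $G$. This yields $R_g f = f$ for all $g \in G$, so $f(g) = f(e)$ for every $g$ and $f$ is constant. I do not expect a genuine obstacle here; the only point requiring care is the translation of $\Delta_h^{n(h)} f = 0$ into the statement that $k \mapsto f(xh^k)$ is polynomial, after which the elementary fact that a polynomial bounded on $\mathbb{Z}$ must be constant completes the reduction.
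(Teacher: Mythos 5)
Your proposal is correct and follows essentially the same route as the paper: both arguments use boundedness of the polynomial sequence $k\mapsto f(xh^k)$ (equivalently $((1+\Delta_h)^kf)(x)$) to force $\Delta_hf=0$ for each generator $h\in E$, and then pass to all of $G$ via the closed-subgroup argument underlying the $m=1$ case of Theorem \ref{LNC}. No gaps.
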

\begin{proof}
Let $h\in E$, then as in the proof of Lemma \ref{precomp}  we see that for each $g\in G$, the bounded sequence $a_k(g) = (R_{h^k}f)(g) = ((1+\Delta_h)^kf)(g)$ is a polynomial in $k$. Hence it does not depend on $k$ and therefore its coefficients $\binom{k}j\Delta_h^jf(g)$, $1\le j\le n(h)$, are zero. So $\Delta_hf(g) = 0$, $R_hf = f$. Since $E$ generates $G$, $R_gf = f$ for all $g\in G$. Thus $f$ is constant.
\end{proof}

\bigskip

{\bf Acknowledgments:} The authors are grateful to Professor Henrik Stetk{\ae}r for useful discussions and to the Referee for helpful suggestions and information.

\bigskip

\end{document}